\documentclass[12pt, reqno]{amsart}


\usepackage{textcomp} 
\usepackage{amsmath, amsfonts,amsthm,amssymb,amsxtra}
\usepackage{amscd}
\usepackage{enumitem}

\usepackage{xcolor}
\usepackage{graphicx}
\usepackage{verbatim}



\setlength{\voffset}{-.7truein}
\setlength{\textheight}{8.8truein}
\setlength{\textwidth}{6.05truein}
\setlength{\hoffset}{-.7truein}


\newtheorem{theorem}{Theorem}[section]

\newtheorem{corollary}[theorem]{Corollary}
\newtheorem{question}[theorem]{Question}

\theoremstyle{definition}

\theoremstyle{remark}

\newtheorem{remark}[theorem]{Remark}


\numberwithin{equation}{section}


\renewcommand{\epsilon}{\varepsilon}

\renewcommand{\phi}{\varphi}

\begin{document}

{
\title{H-principles for regular Lagrangians}
}
\author{Oleg Lazarev }
\thanks{The author was supported by an NSF postdoc fellowship. }
\address{Oleg Lazarev, Columbia University}
\email{olazarev@math.columbia.edu}  

\maketitle

\begin{abstract}
We prove an existence h-principle for regular Lagrangians with Legendrian boundary in arbitrary Weinstein domains of dimension at least six; this extends a previous result of Eliashberg, Ganatra, and the author for Lagrangians in flexible domains. Furthermore, we show that all regular Lagrangians come from our construction and  describe some related decomposition results. We also prove a regular version of Eliashberg and Murphy's h-principle for Lagrangian caps with loose negative end. As an application, 
we give a new construction of infinitely many regular Lagrangian disks in the standard Weinstein ball. 
\end{abstract}

\section{Introduction}

\subsection{Existence h-principle for Lagrangians}

In \cite{EGL}, Eliashberg, Ganatra, and the author introduced the class of \textit{regular} Lagrangians in Weinstein domains. These Lagrangians have Weinstein complement and hence can be constructed by coupled Weinstein handle attachment. Regular Lagrangians have the advantage that they can be manipulated via Weinstein homotopy moves and studied via Legendrian knot theory. The special class of \textit{flexible} Lagrangians, which have flexible Weinstein complement, was also defined in \cite{EGL}. It was shown that flexible Lagrangians with non-empty boundary satisfy an existence and uniqueness h-principle, demonstrating that flexible Weinstein domains have many Lagrangians with Legendrian boundary. The slightly more general class of \textit{semi-flexible} Lagrangians was also introduced; these are constructed by considering a flexible Lagrangian in a flexible Weinstein domain and taking the boundary connected sum with an arbitrary Weinstein domain. However as shown in \cite{EGL}, semi-flexible Lagrangians have vanishing wrapped Floer homology and hence there is no general existence h-principle for semi-flexible Lagrangians in arbitrary Weinstein domains. For example, there is no semi-flexible representative of the cotangent fiber $T^*M_x \subset T^*M$; see Corollary 6.4 of \cite{EGL}. In this paper, we will give a general existence h-principle for Lagrangians with Legendrian boundary in an arbitrary Weinstein domain. Although the resulting Lagrangians cannot be flexible (or semi-flexible) in general, it is interesting to note that their construction uses flexible Lagrangians in a crucial way and in some sense generalizes the construction of semi-flexible Lagrangians.

We first recall the necessary differential-topological condition for a manifold to admit a Lagrangian embedding into a Weinstein domain. 
As in \cite{EGL}, a \textit{formal Lagrangian embedding} of $L$ into a Weinstein domain $W$ is a pair $(f,\Phi_t)$, where $f:(L, \partial L)\rightarrow (W,\partial W)$ is  a smooth embedding and $\Phi_t:TL\to TW$, $t\in[0,1]$,  is a homotopy of injective homomorphisms covering $f$  such that
$\Phi_0=df$ and $\Phi_1$ is a Lagrangian homomorphism, i.e. $\Phi_1(T_x L)\subset T_x W$ is a  Lagrangian  subspace for all $x\in L$.
Note that we do not impose any conditions on $\Phi_t$ restricted to $\partial L$ and that this definition makes sense even if $L$ does not have boundary. We say that two formal Lagrangians are formally Lagrangian isotopic if they are isotopic through formal Lagrangians. 
The following result says that this necessary condition for the existence of a Lagrangian embedding is in fact sufficient. 
\begin{theorem}\label{thm: EGL_h-principle}
Suppose that $L^n, n \ge 3$, has non-empty boundary and
admits a formal Lagrangian embedding into a Weinstein domain $W^{2n}$. Then $L^{n}$ admits a regular Lagrangian embedding into $W^{2n}$ in the same formal Lagrangian isotopy class.
\end{theorem}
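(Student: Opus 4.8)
The plan is to exhibit $L$ together with a Weinstein structure on its complement, so that the pair witnesses $L$ as a regular Lagrangian, and to construct this by coupled Weinstein handle attachment guided by the formal data $(f,\Phi_t)$. The starting point is the observation that, since $\partial L \neq \emptyset$, the manifold $L$ collapses onto a complex of dimension $\le n-1$, so it admits a handle decomposition without $n$-handles; consequently the disk cotangent bundle $D^*L$ carries a subcritical, hence flexible, Weinstein structure in which $L$ sits as the zero section and is a regular Lagrangian. I would take $(D^*L, L)$ as the model neighborhood and aim to produce $W$, up to Weinstein homotopy, as $D^*L$ with Weinstein handles attached compatibly with $L$.

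First I would fix a Weinstein handle decomposition of $W$ and, using the smooth embedding $f$ together with a relative handle decomposition of $(W, f(L))$, organize the construction as an inductive coupled handle attachment: at each stage a Weinstein handle of $W$ is attached along a Legendrian $\Lambda_i$ in the current contact boundary, and, whenever $L$ must pass through that handle, a coupled Lagrangian handle is simultaneously attached to $L$ so that the complement acquires the complementary Weinstein handle. Flexibility enters in two ways. The EGL existence h-principle for flexible Lagrangians realizes formal Lagrangian data in the flexible pieces of the construction in the prescribed formal isotopy class. The loose Legendrian h-principle of Murphy, which requires $n \ge 3$, is then used to realize the attaching Legendrians of the coupled handles from their formal Legendrian data and to position the spheres $\Lambda_i$; here the count $\dim \Lambda_i + \dim \partial L = 2n-2 < 2n-1$ lets me keep attaching spheres disjoint from $\partial L$ generically.

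The step I expect to be the main obstacle is reconciling two competing demands: the ambient Weinstein structure must reproduce $W$ exactly up to Weinstein homotopy, which forbids simply making all attaching data loose, while $L$ must be produced by soft, h-principle methods. My plan is to localize all flexibility to the Lagrangian and its coupled-handle data, attaching the ambient Weinstein handles along the Legendrians dictated by the chosen decomposition of $W$ so that the total space is Weinstein homotopic to $W$ by construction. The obstruction to keeping $L$ disjoint from the Lagrangian cocores is genuinely homological — the intersection number of $L$ with a cocore records how $L$ links that handle and cannot in general be removed — so regularity must be maintained not by pushing $L$ off the handles but by the coupling: each time $L$ links a handle of $W$, the complement is given the complementary handle, preserving the identity $W \simeq D^*L \cup (\text{Weinstein complement})$ after enlarging the neighborhood. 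This is precisely the sense in which the construction generalizes the semi-flexible case, which is the subcase requiring no such coupling.

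Finally I would verify that the formal Lagrangian isotopy class is preserved throughout, by tracking the formal data through each coupled attachment and appealing to the parametric and relative forms of the h-principles invoked, and that the completed complement is genuinely Weinstein. I expect the bookkeeping to reduce, via the flexibility of $D^*L$ and the loose h-principle, to checking the attaching data at the formal level, which is exactly what $(f,\Phi_t)$ provides; the hypothesis $n \ge 3$ is essential both through Murphy's theorem and through the general-position and Whitney-type arguments needed to arrange the smooth and Legendrian attaching maps.
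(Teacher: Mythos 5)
Your proposal correctly isolates the central tension --- the ambient structure must reproduce $W$ on the nose up to Weinstein homotopy, so you cannot loosen its attaching data, yet $L$ must be built by soft methods --- but the resolution you offer does not close the gap. You propose to fix a Weinstein presentation of $W$, attach its handles along the prescribed (rigid) Legendrians, and couple Lagrangian handles to $L$ whenever $L$ ``links'' an ambient handle, realizing the coupled attaching data from the formal data $(f,\Phi_t)$. There is no h-principle that does this: the EGL existence theorem produces Lagrangians only when the complement is \emph{flexible}, i.e.\ when the ambient critical handles are attached along Legendrians that are loose in the complement of $\partial L$, and in a general presentation of a non-flexible $W$ the critical attaching spheres are exactly the data you are not allowed to touch. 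Saying that ``each time $L$ links a handle, the complement is given the complementary handle'' restates the definition of regularity rather than constructing the coupled handle. So as written the inductive step has no engine.

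The idea your proposal is missing is the decomposition theorem from \cite{Lazarev_critical_points}: every $W^{2n}$, $n\ge 3$, is Weinstein homotopic to $W^{2n}_{flex}\cup C^{2n}$, where $C^{2n}$ is a \emph{smoothly trivial} cobordism with just two handles, of index $n-1$ and $n$. This repackages all of $W$'s rigidity into two handles whose isotropic attaching spheres have dimension $n-2$ and $n-1$, both less than $n$. The paper then runs the EGL h-principle entirely inside $W_{flex}$ to get a flexible $L_{flex}$ in the pulled-back formal class, uses Thom jet transversality in $J^1(\partial L_{flex})$ to Legendrian-isotope $\partial L_{flex}$ off those two low-dimensional spheres (this is where your dimension count belongs --- note a generic \emph{smooth} perturbation is not enough, one needs a Legendrian one), and extends $L_{flex}$ trivially across $C^{2n}$; regularity is automatic because $C^{2n}$ is Weinstein. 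Finally, the paper constructs the almost symplectomorphism $\psi\colon W_{flex}\to W$ to be the identity on a region $A$ containing $\partial L_{flex}$ (supported away from the Whitney disks realizing the smooth triviality of $C^{2n}$) precisely in order to verify the formal class is preserved --- a step your plan defers to ``tracking the formal data,'' but which is where the actual bookkeeping lives. Without the $W_{flex}\cup C^{2n}$ decomposition, or a substitute for it, the argument does not go through.
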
 
In particular, Theorem \ref{thm: EGL_h-principle} constructs many regular Lagrangians in arbitrary Weinstein domains. As explained before, these Lagrangians are in general not flexible (if $W$ is not flexible) nor semi-flexible. For example, if $L \cdot L' \ne 0$ for some closed exact Lagrangian $L' \subset W$, then $L$ has non-vanishing wrapped Floer homology and hence cannot be semi-flexible \cite{EGL}. Unlike flexible Lagrangians, these Lagrangians will not be unique in their formal class; see Theorem \ref{thm: lag_concordance} and Corollary \ref{cor: exotic_disks} below. 

We also note that the restriction $n \ge 3$ in Theorem \ref{thm: EGL_h-principle} cannot be removed. For example, there is a formal Lagrangian embedding of the punctured torus $T^2 \backslash D^2 \hookrightarrow B^4$ such that $S^1 = \partial(T^2\backslash D^2) \hookrightarrow S^3$ is the smooth unknot. However, it is known that any exact 2-dimensional Lagrangian whose Legendrian boundary is the smooth unknot must be a disk \cite{EFraser}. A 4-dimensional construction similar to the one in Theorem \ref{thm: EGL_h-principle} was considered by Yasui \cite{Yasui_disks_4}, who produced many Lagrangian disks in $B^4$. However, these disks necessarily have different formal classes, and even smooth isotopy classes.

\subsection{Decomposition of regular Lagrangians}
The proof of Theorem \ref{thm: EGL_h-principle} involves first using the existence h-principle from \cite{EGL} to realize the formal Lagrangian as a flexible Lagrangian in the flexible domain $W_{flex}^{2n}$  and then applying the following result from previous work \cite{Lazarev_critical_points}: any Weinstein domain $W^{2n}, n \ge 3,$ can be Weinstein homotoped to $W_{flex}^{2n}$ plus 
a smoothly trivial Weinstein cobordism $C^{2n}$. Here $W_{flex}^{2n}$ is the unique flexible Weinstein structure almost symplectomorphic to $W^{2n}$; a diffeomorphism $\phi: (W, \omega_W) \rightarrow (X, \omega_X)$ of two symplectic manifolds is an \textit{almost} symplectomorphism if $\phi^*\omega_X$ can be deformed through non-degenerate (but not necessarily closed) two-forms to $\omega_W$. 
Therefore the Lagrangians produced by Theorem \ref{thm: EGL_h-principle} can be decomposed as  flexible Lagrangians in $W_{flex}^{2n}$ that are extended trivially  in $C^{2n}$. The following result shows that in fact all regular Lagrangians with boundary are of this form. 
Here a Weinstein homotopy of $(W, W_0)$ for a Weinstein subdomain $W_0 \subset W$ will mean a Weinstein homotopy of the Weinstein cobordism $W\backslash W_0$ fixed on $\partial_-(W\backslash W_0)  = \partial_+ W_0$. If $\partial W_0 \cap \partial W \ne \emptyset$, then we consider $W\backslash W_0$ as a Weinstein cobordism with corners and require the homotopy to be fixed on these corners as well. For example, if $L \subset W^{2n}$ is a regular Lagrangian with Legendrian boundary, then $W\backslash T^*L$ is a Weinstein cobordism with corners $\partial ST^*L$, the boundary of the unit cotangent bundle $ST^*L$ of $L$. 
\begin{theorem}\label{thm: lag_concordance}
Let $L^n \subset W^{2n}, n \ge 3,$ be a regular Lagrangian with non-empty boundary. Then $(W^{2n}, L^n)$ is Weinstein homotopic to 
$(W^{2n}_{flex} \cup C^{2n}, L^n_{flex})$, where $C^{2n}$ is a  smoothly trivial Weinstein cobordism and $L^n_{flex} \subset W^{2n}_{flex}$ is a flexible Lagrangian. 
\end{theorem}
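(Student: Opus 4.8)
The plan is to reduce the statement to a relative version of the decomposition result from \cite{Lazarev_critical_points}, applied to the Weinstein complement of $L$ rather than to $W$ directly. Since $L^n$ is regular with non-empty boundary, a Weinstein neighborhood $T^*L$ is a Weinstein subdomain of $W$, and the complement $P := W \setminus T^*L$ is a Weinstein cobordism from $\partial_- P = \partial(ST^*L)$ to $\partial_+ P = \partial W$. The crucial preliminary observation is that, because $\partial L \ne \emptyset$, the manifold $L$ admits a handle decomposition with handles of index at most $n-1$; consequently $T^*L$ is built from Weinstein handles of index at most $n-1$, so it is subcritical, hence flexible, and carries no critical (index $n$) handles.

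First I would upgrade the domain statement of \cite{Lazarev_critical_points} to the cobordism $P$, producing a Weinstein homotopy of $P$, fixed on $\partial_- P$ (including the corner $\partial(ST^*L)$), to a cobordism of the form $P_{flex} \cup C$, where $P_{flex}$ is the flexibilization of $P$ and $C$ is smoothly trivial. Since this homotopy is stationary near $\partial_- P$, it extends by the identity over $T^*L$ to a Weinstein homotopy of the pair $(W, L)$, yielding $(W, L) \simeq (T^*L \cup P_{flex} \cup C,\, L)$ with $L$ unchanged as a subset.

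Next I would identify the pieces. Set $W_{flex} := T^*L \cup P_{flex}$. As $T^*L$ is subcritical and $P_{flex}$ is flexible, and they are glued along the contact manifold $\partial(ST^*L)$, the union $W_{flex}$ is again flexible. Moreover, since all critical handles of $W$ arise from $P$ (the subdomain $T^*L$ contributing none), flexibilizing $P$ flexibilizes $W$; combined with the fact that Weinstein homotopy preserves the almost symplectic structure and that $C$ is smoothly trivial, $W_{flex}$ is almost symplectomorphic to $W$ and hence, by uniqueness, is its flexibilization. Finally, the Lagrangian $L \subset W_{flex}$ has flexible Weinstein complement $P_{flex}$, so it is a flexible Lagrangian $L_{flex}$, and $C$ is the required smoothly trivial cobordism, completing the identification with $(W_{flex} \cup C, L_{flex})$.

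The hard part will be the first step: establishing the cobordism-with-boundary version of \cite{Lazarev_critical_points} with the homotopy fixed on $\partial_- P$. The cancelling handle pairs forming $C$ and the loosening of the critical attaching Legendrians must be confined to a collar of $\partial_+ P = \partial W$, so as not to disturb $\partial_- P$ or the neighborhood $T^*L$; I expect the original argument to localize in this way, but verifying that the relative homotopy respects the corner $\partial(ST^*L)$ throughout is the delicate point. A secondary issue is confirming that gluing the two flexible pieces produces a genuinely flexible domain, and that this glued domain is the flexibilization of $W$ rather than merely an almost symplectomorphic flexible filling.
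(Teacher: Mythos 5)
Your proposal follows essentially the same route as the paper: apply the decomposition of \cite{Lazarev_critical_points} to the complementary Weinstein cobordism $W\setminus T^*L$, carried out relative to the corner $\partial ST^*L$ so that the homotopy extends by the identity over $T^*L$, and then observe that $L$ sits as a flexible Lagrangian in $T^*L\cup X_{flex}=W_{flex}$ and extends trivially across the smoothly trivial cobordism $C$. The only cosmetic differences are that the paper localizes the relative homotopy by choosing the Darboux balls and isotropic arcs used for the handle-slides away from $\partial L$ (rather than confining the construction to a collar of $\partial_+$, which is not quite where the handle-slides live), and it bypasses your separate verification that $T^*L\cup P_{flex}$ is the flexibilization of $W$, since looseness of the attaching spheres in the complement of $\partial L$ delivers flexibility of the domain and of the Lagrangian simultaneously.
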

Here $L_{flex}^n \subset W^{2n}_{flex}$ is the unique flexible Lagrangian 
 \cite{EGL} that is formally isotopic to $L \subset W^{2n}$ under the almost symplectomorphism between $W_{flex}$ and $W$. 
So Theorem \ref{thm: lag_concordance} shows that all regular Lagrangians with boundary can be decomposed into a flexible Lagrangian and a smoothly trivial Lagrangian. However we note that it is an open problem whether all exact Lagrangians in Weinstein domains are regular. 
The condition that the Weinstein homotopy is done in the complement of $T^*L^n$ implies that there is an exact symplectomorphism $\phi: W^{2n} \rightarrow W_{flex}^{2n} \cup C^{2n}$ (or rather their completions) such that $\phi(L^n) = L_{flex}^n$. Of course, this does not imply that $L^n \subset W^{2n}$ is flexible since $L^n_{flex} \subset W^{2n}_{flex}$ ceases to be flexible once $C^{2n}$ is attached to $W_{flex}^{2n}$ to form $W^{2n}$. In general, flexible Lagrangians and Weinstein domains are defined only for $n \ge 3$. However a 4-dimensional version of Theorem \ref{thm: lag_concordance} was proven by Conway, Etnyre, and Tosun \cite{Conway_Etnyre_disks}  for regular Lagrangian disks $D^2$ in $B^{4}_{std}$. 
We also note the similarity between the decomposition $(W^{2n}_{flex} \cup C^{2n}, L^n_{flex})$ of arbitrary regular Lagrangians in Theorem \ref{thm: lag_concordance} and
the definition of semi-flexible Lagrangians \cite{EGL}, i.e. Lagrangians $L^n \subset W^{2n}$ such that $(W^{2n}, L^n)$ is Weinstein homotopic to $(W^{2n}_{flex} \natural W_0, L_{flex}^n)$ for $L_{flex}^n \subset W^{2n}_{flex}$ and some arbitrary Weinstein domain $W_0$. 
However the former decomposition is much more general than the latter because the attaching spheres of the Weinstein handles of $C^{2n}$ can link symplectically with the Legendrian boundary $\partial L_{flex}^n$.

A slight modification of Theorem \ref{thm: lag_concordance} implies the following decomposition result for disks in cotangent bundles of spheres. 
\begin{corollary}\label{cor: reg_disk}
Suppose $L^n, n \ge 3,$ is a regular Lagrangian disk in $T^*S^n$ with any Weinstein structure. Then $(T^*S^n, D^n)$ is Weinstein homotopic to 
$(T^*D^n \cup H^n_\Lambda, D^n)$.
\end{corollary}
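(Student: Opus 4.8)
The plan is to specialize the decomposition of Theorem~\ref{thm: lag_concordance} to $W^{2n}=T^*S^n$ and $L^n=D^n$ and to exploit the especially simple Weinstein topology of $T^*S^n$. Applying Theorem~\ref{thm: lag_concordance} directly shows that $(T^*S^n,D^n)$ is Weinstein homotopic to $((T^*S^n)_{flex}\cup C^{2n},D^n_{flex})$, where $C^{2n}$ is smoothly trivial and $D^n_{flex}\subset (T^*S^n)_{flex}$ is the unique flexible Lagrangian disk formally isotopic to $L^n$. The goal is then to recombine everything lying outside a neighborhood of the disk, namely the critical handle of $(T^*S^n)_{flex}$ together with $C^{2n}$, into a single critical $n$-handle.

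First I would make the flexible model explicit. Smoothly $T^*S^n = D^{2n}\cup H^n$ has one $0$-handle and one critical $n$-handle, so $(T^*S^n)_{flex}$ admits the same handle decomposition with the $n$-handle attached along a loose Legendrian sphere $\Lambda_0\subset S^{2n-1}=\partial D^{2n}$. Identifying the $0$-handle $D^{2n}$ with $T^*D^n$, I would arrange $D^n_{flex}$ to be the zero section of this $T^*D^n$ and $\Lambda_0$ (hence the handle $H^n_{\Lambda_0}$) to be disjoint from $\partial D^n_{flex}$; this uses the uniqueness of the flexible Lagrangian disk together with the h-principle of \cite{EGL} and the freedom to push the attaching sphere of the handle off $\partial D^n_{flex}$ inside the large contact boundary $S^{2n-1}$. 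This identifies $((T^*S^n)_{flex},D^n_{flex})$ with $(T^*D^n\cup H^n_{\Lambda_0},D^n)$.

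It remains to absorb $C^{2n}$. After the previous step the complement of $T^*D^n$ in $(T^*S^n)_{flex}\cup C^{2n}$ is the Weinstein cobordism $H^n_{\Lambda_0}\cup C^{2n}$, which is smoothly a single $n$-handle cobordism since $C^{2n}$ is smoothly trivial. By \cite{Lazarev_critical_points}, $(T^*S^n)_{flex}\cup C^{2n}$ is Weinstein homotopic to $T^*S^n$, which carries the one-handle decomposition $T^*D^n\cup H^n$. The slight modification is to run this homotopy rel the disk neighborhood $T^*D^n$, so that it is supported in the complementary cobordism and converts $H^n_{\Lambda_0}\cup C^{2n}$ into a single critical $n$-handle $H^n_\Lambda$, whose attaching sphere $\Lambda$ recovers $T^*S^n$. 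Combining the two steps gives that $(T^*S^n,D^n)$ is Weinstein homotopic to $(T^*D^n\cup H^n_\Lambda,D^n)$.

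I expect the last step to be the main obstacle: one must check that the handle slides and cancellations of \cite{Lazarev_critical_points} that collapse the smoothly trivial part $C^{2n}$ and reorganize the critical handle can all be carried out above $\partial_+(T^*D^n)$, without ever crossing the disk neighborhood, so that $D^n$ persists as the zero section of $T^*D^n$ throughout. Verifying that the corner along $\partial ST^*D^n$, where the Legendrian boundary $\partial D^n$ sits, is respected by the homotopy is the delicate point; I anticipate it follows because all the relevant handles are attached over $\partial_+(T^*D^n)$ and the reorganization takes place entirely in that region.
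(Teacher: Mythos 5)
Your argument is correct and its decisive step coincides with the paper's: both proofs ultimately come down to homotoping the Weinstein complement of $T^*D^n$ --- a Weinstein cobordism with corners $\partial ST^*D^n$ that smoothly carries a single $n$-handle --- to a single Weinstein $n$-handle, relative to the corners, via Theorem 1.1 of \cite{Lazarev_critical_points}. The difference is that you reach that complement by a detour: you first invoke Theorem \ref{thm: lag_concordance} to split off $(T^*S^n)_{flex}\cup C^{2n}$, build an explicit flexible model $T^*D^n\cup H^n_{\Lambda_0}$, and then recombine $H^n_{\Lambda_0}\cup C^{2n}$ into one handle. The paper skips all of this: regularity of $D^n$ already says, by definition, that $(T^*S^n,D^n)$ is Weinstein homotopic to $(T^*D^n\cup C^{2n},D^n)$ for some Weinstein cobordism-with-corners $C^{2n}$, which by the Whitney trick and $n\ge 3$ is smoothly a single $n$-handle, and one applies the rel-corners handle-minimization directly to this $C^{2n}$. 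Your detour is harmless --- after your first two steps the complement of $T^*D^n$ is again just some Weinstein cobordism with corners that is smoothly one $n$-handle, which is exactly the input regularity hands you at the outset --- but it buys nothing, and it imports the extra (real, though manageable) burden of arranging the flexible disk as the zero-section of the $0$-handle with the loose attaching sphere pushed off $\partial D^n_{flex}$ and loose in its complement. You do correctly isolate the genuinely delicate point, namely that the handle slides and cancellations of \cite{Lazarev_critical_points} must be carried out away from the corner $\partial ST^*D^n$; this is resolved exactly as you anticipate, by choosing the Darboux charts and isotropic arcs used for the handle-slides in the contact boundary away from $\partial D^n$.
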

Here $T^*D^n$ is equipped with the standard subcritical Weinstein structure, $\Lambda$ is some Legendrian in $ST^*D^n  = (S^{2n-1}, \xi_{std}) \backslash N(\partial D^n)$, and $D^n\subset T^*D^n \cup H^n_\Lambda$ corresponds to the zero-section of $T^*D^n$. Since $T^*D^n = B^{2n}_{std}$, Corollary \ref{cor: reg_disk} provides a presentation of $T^*S^n_{std}$, the standard cotangent bundle, using a single $n$-handle. There are many such Lagrangian disks $D^n \subset T^*S^n_{std}$ distinguished by wrapped Floer homology,  e.g. the graphical Lagrangian disks considered in \cite{Abouzaid_Seidel}. So  Corollary \ref{cor: reg_disk} seems to imply that there are many $\Lambda$ such that $T^*S^n_{std} = B^{2n}_{std} \cup H^n_{\Lambda}$.
However for the construction of $D^n$, it is possible that only the linking of $\Lambda$ with $\Lambda_{unknot}:= \partial D^n \subset \partial T^*D^n = \partial B^{2n}$ matters. For example, if $D^n \cdot S^n = k$, then $\Lambda_{unknot}:=\partial D^n \subset \partial T^*D^n$ and $\Lambda \subset \partial T^*D^n$ have linking number $k$. We do not know how much the Legendrian isotopy class of $\Lambda$ matters in this situation. 
\begin{question}
Suppose $D^n$ is a regular Lagrangian disk in the \textit{standard} cotangent bundle $T^*S^n_{std}$. Can we always take $\Lambda$ in Corollary \ref{cor: reg_disk} to be $\Lambda_{unknot}$?
\end{question}
A positive answer to this question would imply that the data of a regular disk $D^n \subset T^*S^n_{std}$ is just the data of two Legendrian unknots $\Lambda_{unknot,1} \coprod \Lambda_{unknot,2} \subset (S^{2n-1},\xi_{std})$ that are Legendrian linked with each other. This is equivalent to the data of a single Legendrian unknot in $(S^{2n-1}, \xi_{std}) \backslash N(\Lambda_{unknot}) = ST^*D^n$. A positive answer would also imply that any regular Lagrangian disk in $T^*S^n_{std}$ can be disjoined from some cotangent fiber.  Corollary \ref{cor: reg_disk} already shows that for any regular disk $D^n \subset T^*S^n_{std}$, there exists another regular disk $C^n$, namely the co-core of the handle $H^n_{\Lambda}$, which is disjoint from $D^n$ and generates the wrapped Fukaya category of $T^*S^n_{std}$; see \cite{chantraine_cocores_generate}.

We also point out that there is another natural decomposition of regular Lagrangian disks. As shown in \cite{EGL}, any regular Lagrangian disk can be presented as the co-core of an $n$-handle. 
However caving out this co-core, i.e. removing the $n$-handle, can often result in exotic Weinstein domains. For example, consider  an exotic Weinstein structure $\Sigma^{2n}$ on the ball $B^{2n}$, 
for example the structures constructed by McLean \cite{MM}, and let $D^n \subset \Sigma^{2n} \cup H^n_{\Lambda_{unknot}}$ be the co-core of the handle $H^n_{\Lambda_{unknot}}$. The result of caving out $D^n$ is precisely $\Sigma^{2n}$. In general, the caved out domain does not even have to be diffeomorphic to the ball. For example, if $D^n \cdot S^n = k$ in $T^*S^n$, then the caved out domain is a rational homology ball $B^{2n}_k$ with $H_{n-1}(B^{2n}_k; \mathbb{Z}) \cong \mathbb{Z}/k\mathbb{Z}$. Hence this co-core decomposition results in the data of a (possibly exotic) Weinstein structure $\Sigma^{2n}$ on $B^{2n}$ (or some other smooth domain) and a Legendrian in $\partial \Sigma^{2n}$. On the other hand, our decomposition in Corollary \ref{cor: reg_disk} just depends on the data of a Legendrian in the standard structure $(S^{2n-1}, \xi_{std}) \backslash N(\Lambda_{unknot}) = ST^*D^n$.

Of course closed Lagrangians cannot be decomposed as in Theorem \ref{thm: lag_concordance} since flexible Weinstein domains have no closed exact Lagrangians. However, a similar factorization result does hold in the closed case: all the topology (except the top handle) of a closed regular Lagrangian can be put in a flexible domain. For example, in \cite{EGL}, it was shown that for any closed smooth manifold $M^n, n \ge 3$, satisfying the appropriate formal conditions, there is a Weinstein structure $T^*S^n_{M}$ on $T^*S^n$ that contains $M$ as a regular Lagrangian. These examples are constructed by using the flexible Lagrangian existence h-principle \cite{EGL} to produce a flexible embedding $M^n\backslash D^n \subset B^{2n}_{std}$ and then attaching a handle to $\partial(M^n\backslash D^n ) \subset (S^{2n-1}, \xi_{std}) = \partial B^{2n}_{std}$, i.e. $(T^*S^n_M, M^n)$ is Weinstein homotopic to 
$(B^{2n}_{std} \cup H^n_{\partial (M\backslash D^n)}, (M^n\backslash D^n) \cup H^n_{\partial (M\backslash D^n)})$. So the interesting topology of $M^n$ is contained in the flexible domain $B^{2n}_{std}$. In fact, all regular Lagrangians in $T^*S^n$ can be constructed this way. More generally, we have the following result. 
\begin{theorem}\label{thm: all_reg_lag}
Suppose that $L^n$ is a closed regular Lagrangian in $W^{2n}, n \ge 3$. 
Then there exists a regular Lagrangian $L^n \backslash D^n$  in a flexible domain $V^{2n}_{flex}$ such that 
$(W^{2n}, L^n)$ is Weinstein homotopic to $(V^{2n}_{flex} \cup H^n_{\partial (L\backslash D^n)}, (L^n \backslash D^n) \cup H^n_{\partial (L\backslash D^n)})$. 
\end{theorem}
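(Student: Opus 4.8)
The plan is to reduce to the relative (boundary) situation by amputating the top cell of $L$, flexibilize the ambient domain away from the Lagrangian, and then cancel the resulting auxiliary cobordism against the single critical handle that caps off $L$.

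First I would invoke the coupled handle presentation of regular Lagrangians from \cite{EGL}. Since $L$ is closed (and, taking $L$ connected, it must be, as a flexible domain contains no closed exact Lagrangian to absorb extra components), it has a handle decomposition with a unique top-index handle, so the coupled Weinstein presentation of $W$ carries a single critical handle $H^n$ whose Lagrangian core is the top disk $D^n \subset L$. Amputating it yields $(W,L) = (V \cup H^n_{\Lambda'}, L_0 \cup D^n)$, where $V = W \setminus H^n$, $L_0 = L \setminus D^n$ is a regular Lagrangian with Legendrian boundary $\Lambda' = \partial L_0 \subset \partial V$, and $D^n$ is the core of $H^n$. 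The structural point I would emphasize is that $L_0$ retracts onto its $(n-1)$-skeleton, so $T^*L_0$ is subcritical; hence every coupled handle building $L_0$ is subcritical, and every critical handle of $V$ lies in the complement of $T^*L_0$.

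Next I would flexibilize $V$ while keeping $L_0$ a regular (not necessarily flexible) Lagrangian. Applying the decomposition of \cite{Lazarev_critical_points} used in the proof of Theorem \ref{thm: lag_concordance}, I obtain a Weinstein homotopy $V \simeq V_{flex} \cup C$ with $V_{flex}$ flexible and $C$ smoothly trivial; since the handles carrying $L_0$ are subcritical, they are swept into the flexible part, so $L_0$ may be taken inside $V_{flex}$, with its non-loose boundary $\Lambda'$ and the linking of $\Lambda'$ with the complementary critical handles of $V$ (now recorded by $C$) unchanged. Capping off with the untouched handle $H^n$ gives $(W,L) \simeq (V_{flex} \cup C \cup H^n, L_0 \cup D^n)$. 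It is essential here that I do \emph{not} flexibilize the complement $V \setminus T^*L_0$ itself, as in Theorem \ref{thm: lag_concordance}, since that would loosen $\Lambda'$ and force $V_{flex} \cup H^n$ to be flexible, which is impossible because it contains the closed exact Lagrangian $L$.

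The remaining, and genuinely hard, step is to absorb $C$ into the single handle $H^n$. Reordering by index, the subcritical handles of $C$ are absorbed into $V_{flex}$ (flexibility is preserved under subcritical attachment, and these handles are disjoint from $L_0$), enlarging the flexible domain. The critical handles of $C$ smoothly cancel the subcritical handles just absorbed, as $C$ is smoothly trivial; the main obstacle is to realize these as Weinstein cancellations rather than merely smooth ones, since the attaching Legendrians of the critical handles of $C$ are non-loose and so cannot be isotoped into cancelling position directly. I expect to resolve this by sliding each critical handle of $C$ over $H^n$ — the one critical handle we are allowed to keep — thereby modifying the attaching Legendrian $\Lambda'$ (making it more complicated, but still a single connected sphere) while freeing the critical handle of $C$ to cancel its subcritical partner; the hypothesis $n \ge 3$ is exactly what makes the requisite Legendrian isotopies and handle slides available. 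Carrying this out concentrates all of the critical topology of $W$ into the single handle $H^n$ and leaves $L_0$ as a regular Lagrangian in the flexible domain $V_{flex}$, yielding the desired homotopy $(W,L) \simeq (V_{flex} \cup H^n_{\partial(L\backslash D^n)}, (L\backslash D^n) \cup H^n_{\partial(L \backslash D^n)})$.
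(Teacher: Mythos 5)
Your reduction to the relative picture is the right first move and matches the paper's: present $(W,L)$ as $(T^*L\cup C, L)$, reorder the critical handles so that the unique $n$-handle of $T^*L$ (whose core is the top disk $D^n\subset L$) is attached last, and observe that everything below it is $T^*(L\setminus D^n)\cup C_{sub}$ (subcritical) together with the remaining critical handles of $C$. From there your argument diverges and develops a genuine gap at exactly the step you flag as ``genuinely hard.'' You first flexibilize $V$ via Theorem \ref{thm: flexible_subdomain}, producing $V\simeq V_{flex}\cup C'$ with $C'$ smoothly trivial, and then try to absorb $C'$ into the single handle $H^n_{\Lambda'}$ by handle-sliding and then Weinstein-cancelling the handles of $C'$ against each other. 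This cancellation cannot be carried out in general: if the index-$n$ handle of $C'$ could be brought into Weinstein-cancelling position with its index-$(n-1)$ partner, $C'$ would be a trivial Weinstein cobordism and $V$ would be Weinstein homotopic to $V_{flex}$, which is false whenever $V$ is not flexible. Handle-sliding over $H^n_{\Lambda'}$ does not repair this: the slid Legendrian may become loose, but it is not loose in the complement of $\Lambda'$ (precisely the caveat stressed after Theorem \ref{thm: flexible_subdomain}), so Murphy's h-principle does not supply a Legendrian isotopy into cancelling position that avoids the attaching sphere $\Lambda'$ of the one handle you must keep; and if $\Lambda'$ is disjoint from the relevant belt sphere, the slide does not even change the intersection pattern you need to fix. (As written the slide also goes the wrong way: sliding a handle of $C'$ over $H^n$ modifies the attaching sphere of that handle of $C'$, not $\Lambda'$.)

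The paper's proof shows that no cancellation is needed and no preliminary flexibilization of $V$ should be done. With the $n$-handle of $T^*L$ postponed to the end, the attaching link $\Lambda$ of the critical handles of $C$ is viewed in $\partial(T^*(L\setminus D^n)\cup C_{sub})$ and is handle-slid over $\partial(L\setminus D^n)$ --- the attaching sphere of the final handle --- so that $h_{\partial(L\setminus D^n)}(\Lambda)$ becomes loose. Since $T^*(L\setminus D^n)\cup C_{sub}$ is subcritical, $V_{flex}:=T^*(L\setminus D^n)\cup C_{sub}\cup H^n_{h(\Lambda)}$ is flexible by construction and contains $L\setminus D^n$ as a regular Lagrangian. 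The handle-slide is a Legendrian isotopy in $\partial(T^*L\cup C_{sub})$, i.e.\ once $H^n_{\partial(L\setminus D^n)}$ is attached, so it is a Weinstein homotopy of the total pair, giving $(W,L)\simeq(V_{flex}\cup H^n_{\partial(L\setminus D^n)},(L\setminus D^n)\cup H^n_{\partial(L\setminus D^n)})$ directly. In short: the handle you are allowed to keep is used to \emph{loosen} the other critical attaching spheres, not to set up a cancellation; with that adjustment your outline collapses to the paper's one-step argument.
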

The point of Theorem \ref{thm: all_reg_lag} is that the Lagrangian $L^n \backslash D^n$ is in the \textit{flexible} structure $V_{flex}^{2n}$. If we allowed $V^{2n}$ to have an arbitrary Weinstein structure, then this result would follow immediately from the definition of regularity. By Theorem \ref{thm: lag_concordance}, the regular Lagrangian $L^n\backslash D^n \subset V_{flex}^{2n}$ can be further decomposed into the flexible Lagrangian $(L^n \backslash D^n)_{flex} \subset V_{flex}^{2n}$ 
plus a trivial extension in the Weinstein cobordism $V_{flex} \backslash i(V_{flex})$, where $i: V_{flex} \hookrightarrow V_{flex}$ is some Weinstein embedding.

In general the topology of $V_{flex}$ in Theorem \ref{thm: all_reg_lag} will depend on the formal class of $L^n \subset W^{2n}$. Except for this, we can essentially control the topology of $V_{flex}^{2n}$. For example, a slight modification of Theorem \ref{thm: all_reg_lag} shows that 
for any regular $M^n \backslash D^n \subset W^{2n}$, 
$(W^{2n} \cup H^n_{\partial M^n \backslash D^n}, 
M^n \backslash D^n \cup H^n_{\partial M^n \backslash D^n})$ is Weinstein homotopic to 
$(W^{2n}_{flex} \cup H^n_{\partial M^n \backslash D^n}, 
M^n \backslash D^n \cup H^n_{\partial M^n \backslash D^n})$. In particular, there is an exact symplectomorphism $\phi: W^{2n}\cup H^n_{\partial M^n \backslash D^n}  \rightarrow W_{flex}^{2n} \cup H^n_{\partial(M^n \backslash D^n)}$ such that $\phi(M^n) =  M^n$. However, this symplectomorphism does not induce a symplectomorphism between $W^{2n}$ and $W^{2n}_{flex}$ (as it shouldn't). 
This is because even though $\phi$ maps $M^n$ to $M^n$,  it does not map the co-core of $H^n_{\partial(M^n \backslash D^n)}$ in $W^{2n} \cup H^n_{\partial(M^n \backslash D^n)}$ to the co-core of $H^n_{\partial(M^n \backslash D^n)}$ in $W^{2n}_{flex} \cup H^n_{\partial(M^n \backslash D^n)}$, which would be needed to conclude that  $W^{2n}$ and $W^{2n}_{flex}$ are symplectomorphic. 
In particular, the two co-cores are two regular Lagrangian disks $D^n_1, D^n_2 \subset W^{2n} \cup H^n_{\partial(M^n \backslash D^n)}$ both intersecting $M^n$ in exactly one point such that
 caving out $D^n_1$ results in $W^{2n}$ but caving out $D^n_2$ results in $W^{2n}_{flex}$. In particular,  these disks can be smoothly isotopic but are not Lagrangian isotopic. 

We can rephrase the above discussion as follows. Let $\mathfrak{Lagrangian}(W^{2n}, L^n)$ denote all regular embeddings of a closed manifold $L^n$ into some fixed Weinstein domain $W^{2n}$. Then Theorem \ref{thm: all_reg_lag} shows that the following map induced by simultaneous handle-attachment is surjective: 
\begin{equation}\label{eqn: surjective}
\mathfrak{Lagrangian}(W_{flex}^{2n}, M^n\backslash D^n) \rightarrow \mathfrak{Lagrangian}(W^{2n} \cup H^n, (M^n \backslash D^n) \cup H^n_{\partial(M\backslash D^n)}).
\end{equation}
On the right-hand-side,  $M^n\backslash D^n$ is considered as a Lagrangian in $W^{2n}$, which is allowed to have an arbitrary Weinstein structure. In particular, the map is surjective even if we consider all Weinstein structures on the right-hand-side since the Weinstein structure on the left-hand-side is always flexible. A similar map just on the level of Weinstein domains is considered in \cite{Lazarev_critical_points} and is also shown to be surjective. Theorem \ref{thm: all_reg_lag} shows that surjectivity holds even when we consider Weinstein domains and Lagrangians simultaneously.  

Now we consider an  application of Theorem \ref{thm: all_reg_lag}.
\begin{corollary}\label{cor: reg_sphere_cotangent}
	Let $S^n$ be a regular Lagrangian sphere in $T^*S^n$, with any Weinstein structure. Then there exists a regular disk $D^n \subset B^{2n}_{std}$ such that $(T^*S^n, S^n)$ is Weinstein homotopic to $(B^{2n}_{std} \cup H^n_{\partial D^n}, D^n \cup H^n_{\partial D^n})$. 
\end{corollary}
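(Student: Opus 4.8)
The plan is to derive the corollary directly from Theorem~\ref{thm: all_reg_lag}, the only substantive task being to identify the flexible domain it outputs. Applying Theorem~\ref{thm: all_reg_lag} to the closed regular Lagrangian $L^n=S^n\subset W^{2n}=T^*S^n$ produces a flexible domain $V^{2n}_{flex}$ containing a regular Lagrangian $S^n\backslash D^n\cong D^n$ together with a Weinstein homotopy
\[
(T^*S^n,S^n)\ \simeq\ \bigl(V^{2n}_{flex}\cup H^n,\ (S^n\backslash D^n)\cup H^n\bigr),
\]
where $H^n$ is attached along $\partial(S^n\backslash D^n)\subset\partial V_{flex}$ and its core completes $S^n\backslash D^n$ to $S^n$. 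Thus the statement reduces to proving that $V^{2n}_{flex}$ is Weinstein homotopic to $B^{2n}_{std}$; the Lagrangian $S^n\backslash D^n$, carried along such a homotopy, then furnishes the desired regular disk $D^n\subset B^{2n}_{std}$.

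First I would determine the smooth type of $V_{flex}$. The co-core $C^n$ of $H^n$ meets the core disk transversally in one point and is disjoint from $S^n\backslash D^n\subset V_{flex}$; since the Lagrangian sphere equals $(S^n\backslash D^n)\cup(\text{core})$, this gives $C^n\cdot S^n=1$, and caving out $C^n$ recovers $V_{flex}$. Feeding $C^n\cdot S^n=1$ into the long exact sequence of the pair $(T^*S^n,V_{flex})$---in which the handle contributes $H_*(H^n,\partial_-H^n)=\mathbb{Z}$ in degree $n$ only---the map $H_n(T^*S^n)\to H_n(T^*S^n,V_{flex})$ sends $[S^n]$ to the relative generator and is therefore an isomorphism. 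A short diagram chase then forces $V_{flex}$ to be acyclic, i.e.\ an integral homology ball; this is the $k=1$ case of the homology balls $B^{2n}_k$ discussed above.

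Next I would promote this to a diffeomorphism $V_{flex}\cong B^{2n}$. Because $n\ge 3$, attaching the $n$-handle does not alter $\pi_1$, so $\pi_1(V_{flex})=\pi_1(T^*S^n)=1$ and $V_{flex}$ is contractible. For the boundary, $\partial(V_{flex}\cup H^n)=\partial T^*S^n=ST^*S^n$ is obtained from $\partial V_{flex}$ by surgery on an embedded $S^{n-1}$; for $n\ge 3$ this is surgery on a simply connected sphere of codimension at least three, hence it preserves $\pi_1$, giving $\pi_1(\partial V_{flex})=\pi_1(ST^*S^n)$. The latter vanishes for $n\ge 3$ by the long exact sequence of the fibration $S^{n-1}\to ST^*S^n\to S^n$. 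A contractible $2n$-manifold with $2n\ge 6$ and simply connected boundary is diffeomorphic to $B^{2n}$ by the $h$-cobordism theorem. Finally, a contractible manifold carries a unique homotopy class of almost complex structure, so by the flexibility $h$-principle $V_{flex}$ is Weinstein homotopic to $B^{2n}_{std}$; transporting $S^n\backslash D^n$ along this homotopy (regular Lagrangians deform under Weinstein homotopy) yields the regular disk $D^n\subset B^{2n}_{std}$ and completes the proof.

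The main obstacle is the boundary computation. To apply the $h$-cobordism theorem and rule out a fake ball I must know $\partial V_{flex}$ is simply connected, and this is precisely where $n\ge 3$ is indispensable: it is needed both for the defining surgery to preserve $\pi_1$ and for $ST^*S^n$ itself to be simply connected. Once $V_{flex}$ is recognized as the standard ball, the remaining assertions are formal consequences of Theorem~\ref{thm: all_reg_lag} and of the uniqueness built into the flexibility $h$-principle.
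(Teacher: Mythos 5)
Your proof is correct and follows essentially the same route as the paper: apply Theorem~\ref{thm: all_reg_lag}, use the co-core intersection argument to show $V_{flex}$ is a homology ball with simply connected boundary, invoke the $h$-cobordism theorem, and finish with the flexibility $h$-principle of \cite{CE12}. The only difference is that you supply the homological and $\pi_1$ details that the paper leaves implicit.
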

For example, the zero-section $S^n_0 \subset T^*S^n_{std}$ can be obtained by attaching a handle to the boundary of the Lagrangian unknot $D^n_0 \subset B^{2n}_{std}$. It is unknown whether there are exotic Lagrangian spheres in $T^*S^n_{std}$ (or whether all Lagrangian spheres in $T^*S^n_{std}$ are regular). However Corollary \ref{cor: reg_sphere_cotangent} shows that all such hypothetical spheres come from Lagrangian disks in $B^{2n}_{std}$. In particular, the following map is surjective. 
\begin{equation}\label{eqn: surjective_cotangent}
\mathfrak{Lagrangian}(B^{2n}_{std}, D^n) \rightarrow \mathfrak{Lagrangian}(T^*S^n, S^n).
\end{equation}
Hence there are at least as many regular Lagrangians disks in $B^{2n}_{std}$ as regular Lagrangian spheres in $T^*S^n$, with some Weinstein structure. Since the latter set is infinite, so is the former. 
\begin{corollary}\label{cor: exotic_disks}
If $n \ge 4$, there are infinitely many different regular Lagrangian disks in the standard Weinstein ball $B^{2n}_{std}$.  
\end{corollary}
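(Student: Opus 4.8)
The plan is to reduce the statement to the surjectivity of the handle-attachment map \eqref{eqn: surjective_cotangent} together with a cardinality count. A surjection cannot have a domain of strictly smaller cardinality than its codomain, so $|\mathfrak{Lagrangian}(B^{2n}_{std}, D^n)| \ge |\mathfrak{Lagrangian}(T^*S^n, S^n)|$, where on the right we range over all Weinstein structures on $T^*S^n$. Hence it suffices to exhibit infinitely many pairwise inequivalent regular Lagrangian spheres in $T^*S^n$; the infinitude of regular Lagrangian disks in the fixed standard ball $B^{2n}_{std}$ is then automatic. All of the content therefore lies on the cotangent-bundle side, and the task splits into a construction and a distinguishing argument.

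For the construction I would begin with a family of pairwise non-symplectomorphic exotic Weinstein balls $\Sigma_k^{2n}$, $k \in \N$, each diffeomorphic to $B^{2n}$ with standard contact boundary $(S^{2n-1}, \xi_{std})$ and pairwise separated by symplectic cohomology; such families (in the spirit of McLean \cite{MM}) are available for $n \ge 4$. In each $\Sigma_k$ I would apply the existence h-principle, Theorem \ref{thm: EGL_h-principle}, in a form relative to a standard model near $\partial \Sigma_k$, to realize a regular Lagrangian disk $D_k^n \subset \Sigma_k$ whose Legendrian boundary is the standard unknot $\Lambda_{unknot} \subset (S^{2n-1}, \xi_{std})$; this is possible because a disk is formally unobstructed and the boundary data may be prescribed. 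Attaching the standard Weinstein $n$-handle produces $W_k := \Sigma_k \cup H^n_{\Lambda_{unknot}}$, which is a Weinstein structure on $T^*S^n$ because attaching an $n$-handle along the Legendrian unknot realizes the standard handle decomposition of the disk cotangent bundle, together with a regular Lagrangian sphere $S_k^n = D_k^n \cup (\text{core of } H^n_{\Lambda_{unknot}})$; regularity survives because it is preserved under handle attachment \cite{EGL}.

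The hard part will be showing that the pairs $(W_k, S_k^n)$ are pairwise inequivalent. The natural invariant is the symplectic cohomology of $W_k$, or equivalently the wrapped Floer cohomology of the co-core of $H^n_{\Lambda_{unknot}}$: since the $\Sigma_k$ are already separated by symplectic cohomology, I would argue that this separation persists after attaching a single handle along the fixed Legendrian $\Lambda_{unknot}$, using the Bourgeois--Ekholm--Eliashberg surgery formula together with Viterbo functoriality to extract a $k$-dependent invariant of $W_k$ from that of $\Sigma_k$. The delicate points are that neither the handle attachment nor the passage to the pair $(W_k, S_k^n)$ may collapse this invariant, and that one must pin down the precise dimension range in which both the exotic family and the surgery comparison coexist — this, rather than the $n \ge 3$ of the underlying h-principles, is the source of the hypothesis $n \ge 4$. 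Once inequivalence of the $(W_k, S_k^n)$ is established, surjectivity of \eqref{eqn: surjective_cotangent} forces infinitely many distinct regular Lagrangian disks in $B^{2n}_{std}$, as claimed.
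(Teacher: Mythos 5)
Your overall skeleton --- reduce to the surjectivity of \eqref{eqn: surjective_cotangent} and then exhibit infinitely many pairwise inequivalent pairs $(T^*S^n, S^n)$ --- is exactly the paper's strategy, and the cardinality reduction is fine. The gap is in how you produce and distinguish the pairs. First, Theorem \ref{thm: EGL_h-principle} controls only the \emph{formal} class of the regular Lagrangian it produces, not the Legendrian isotopy class of its boundary, so you cannot arrange $\partial D_k^n = \Lambda_{unknot}$ by invoking it. Second, and more seriously, the step you yourself flag as ``the hard part'' --- showing that $SH(\Sigma_k \cup H^n_{\partial D_k^n})$ still separates the $k$'s after attaching a handle along a Legendrian you do not control --- is left as a sketch, and a BEE/Viterbo argument there is genuinely delicate: surgery along a Legendrian admitting an exact filling can change symplectic homology drastically, and nothing in your outline rules out a collapse of the idempotent count. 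As written, both the construction and the distinguishing argument have holes.

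The paper sidesteps all of this by choosing the exotic pairs so that the distinguishing computation is immediate: take $W_k := T^*S^n_{std} \natural \Sigma_k^{2n}$ (with $\Sigma_k^{2n} = \natural_{i=1}^k \Sigma^{2n}$ built from McLean's exotic ball, which is where $n \ge 4$ enters) together with the zero-section $S^n_0$ of the $T^*S^n_{std}$ summand as the regular sphere. Since symplectic homology is additive under boundary connected sum and $SH(T^*S^n_{std})$ contributes a single nonzero idempotent, the $W_k$ are pairwise non-symplectomorphic with no surgery formula needed; Theorem \ref{thm: all_reg_lag} together with the identification of the flexible piece as $B^{2n}_{std}$ (as in Corollary \ref{cor: reg_sphere_cotangent}) then hands you the disks $D_k^n \subset B^{2n}_{std}$, distinguished because a symplectomorphism of $(B^{2n}_{std}, D_k^n)$ with $(B^{2n}_{std}, D_{k'}^n)$ would induce one of the $W_k$'s. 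Note that if you could force $\partial D_k^n$ to be a Legendrian unknot lying in a Darboux ball of $\partial \Sigma_k$, your handle attachment would literally be the boundary connected sum $\Sigma_k \natural T^*S^n_{std}$, i.e.\ the paper's construction --- so the right fix is not to run the h-principle inside $\Sigma_k$ at all, but to take the standard small Lagrangian disk in a Darboux chart, at which point the surgery-formula step you were worried about evaporates.
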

More precisely, these Lagrangian disks are not isotopic through exact Lagrangians with Legendrian boundary. In fact, there is no symplectomorphism of $B^{2n}_{std}$ taking the (completed) disks to each other. Hence these are non-flexible disks in a flexible Weinstein domain. The first such examples were found by Eliashberg, Ganatra, and the author \cite{EGL} using a different construction based on work of Murphy and Siegel \cite{MS}. The disks in \cite{EGL} were shown to be exotic because the complement Weinstein subdomains obtained by caving them out are also exotic \cite{MS}. Since it is not known how to cave out Lagrangians other than disks to produce Weinstein subdomains, it is not clear how to extend the method in \cite{EGL} to Lagrangians with more general topology. On the other hand, the construction in Corollary \ref{cor: exotic_disks} can be easily modified to create many exotic Lagrangians in $B^{2n}_{std}$ with non-trivial topology, i.e. there is no topological restriction on the Lagrangian $L^n$ in Theorem \ref{thm: all_reg_lag} (besides the necessary formal conditions).

Since flexible domains have vanishing symplectic homology, the Lagrangians in Corollary \ref{cor: exotic_disks} all have vanishing wrapped Floer homology. However they can be  distinguished by the Legendrian contact homology of their Legendrian boundaries (and so even their Legendrian boundaries are not isotopic). Even though the wrapped Floer homology of these disks vanishes, the Legendrian contact homology does not vanish since the Legendrian boundaries have exact Lagrangian fillings, namely the disks themselves.

Using the regular disks from Corollary \ref{cor: exotic_disks}, it is easy to construct exotic disks in the standard cotangent bundle $T^*S^n_{std}$ disjoint from the zero-section. Abouzaid-Seidel \cite{Abouzaid_Seidel} constructed exotic disks in $T^*S^n_{std}$ that intersect the zero-section $S^n_{std}$ many times; these were obtained by looking at the graphs of functions $f: D^n \rightarrow \mathbb{R}$ in $T^*D^n \subset T^*S^n$ and distinguished by their wrapped Floer homology with the zero-section. However, it was unclear whether there are any exotic disks that intersects $S^n_{std} \subset T^*S^n_{std}$ exactly once.  Any such disk is equivalent to $T^*S^n_x$ in the wrapped Fukaya category of $T^*S^n$ and hence cannot be distinguished via its wrapped Floer homology. 
Here we show that such exotic disks do in fact exist in abundance, distinguished again by the Legendrian contact homologies of their boundaries. 
\begin{corollary}\label{cor: disks_int_zero_section}
If $n \ge 4$, there are infinitely many different regular Lagrangian disks in the standard cotangent bundle $T^*S^n_{std}$ that intersect the zero-section $S^n_0 \subset T^*S^n_{std}$ exactly once.
\end{corollary}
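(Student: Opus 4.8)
The plan is to build the desired disks by boundary-connect-summing the exotic disks of Corollary \ref{cor: exotic_disks} with a standard cotangent fiber, and to distinguish them via the Legendrian contact homology of their boundaries. First I would recall the standard presentation $T^*S^n_{std} = B^{2n}_{std} \cup H^n_{\Lambda_{unknot}}$, under which the zero-section decomposes as $S^n_0 = D^n_0 \cup \mathrm{core}(H^n_{\Lambda_{unknot}})$, where $D^n_0 \subset B^{2n}_{std}$ is the Lagrangian unknot filling $\Lambda_{unknot}$, while the cotangent fiber $T^*S^n_x$ is the co-core of $H^n_{\Lambda_{unknot}}$ and meets $S^n_0$ in exactly one point, namely where the co-core crosses the core. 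Let $\Lambda_{fiber} := \partial T^*S^n_x \subset ST^*S^n = \partial T^*S^n_{std}$ denote its Legendrian boundary, and let $\{D^n_i\}_{i \in \N} \subset B^{2n}_{std}$ be the infinitely many pairwise non-isotopic regular disks from Corollary \ref{cor: exotic_disks}, with Legendrian boundaries $\Lambda_i := \partial D^n_i \subset (S^{2n-1}, \xi_{std})$ pairwise distinguished by their Legendrian contact homology.

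Next I would position each $D^n_i$ inside the subdomain $B^{2n}_{std} \subset T^*S^n_{std}$ and disjoint from a Weinstein neighborhood of $\Lambda_{unknot}$, so that $D^n_i$ is disjoint from $D^n_0$ and hence from $S^n_0$; this is possible because the Lagrangian unknot is supported in an arbitrarily small ball. Surviving the surgery along $\Lambda_{unknot}$, the Legendrian $\Lambda_i$ lands in $ST^*S^n$ unchanged, and I would then form the boundary connected sum $\widehat{D}^n_i := D^n_i \,\natural\, T^*S^n_x$ along a band joining $\Lambda_i$ to $\Lambda_{fiber}$ near the contact boundary. Since a boundary connected sum of two disks is again a disk and is a regular operation supported near $\partial$, each $\widehat{D}^n_i$ is a regular Lagrangian disk whose boundary is the Legendrian connected sum $\Lambda_i \# \Lambda_{fiber}$. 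Because $D^n_i$ is disjoint from $S^n_0$, the joining band lies near the contact boundary away from $S^n_0$, and $T^*S^n_x$ meets $S^n_0$ exactly once, the disk $\widehat{D}^n_i$ intersects $S^n_0$ in precisely one point, as required.

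Finally I would distinguish the $\widehat{D}^n_i$ by the Legendrian contact homology of $\partial \widehat{D}^n_i = \Lambda_i \# \Lambda_{fiber}$. The main obstacle is to show that connect-summing with the \emph{fixed} Legendrian $\Lambda_{fiber}$ preserves the pairwise distinctness supplied by Corollary \ref{cor: exotic_disks}. For this I would invoke the connected-sum (coproduct) formula for the Chekanov--Eliashberg DGA, which expresses the DGA of $\Lambda_i \# \Lambda_{fiber}$ in terms of the DGAs of the two summands, together with the observation that $\Lambda_i$ lies outside the surgery region, so its DGA is unaffected by the passage from $(S^{2n-1}, \xi_{std})$ to $ST^*S^n$. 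Since $\Lambda_{fiber}$ is independent of $i$ while the $\Lambda_i$ carry pairwise distinct invariants, the sums $\Lambda_i \# \Lambda_{fiber}$ remain pairwise distinct, so the disks $\widehat{D}^n_i$ are pairwise non-isotopic through exact Lagrangians with Legendrian boundary. The restriction $n \ge 4$ is inherited directly from Corollary \ref{cor: exotic_disks}, yielding infinitely many such disks.
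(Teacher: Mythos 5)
Your geometric construction is a genuinely different route from the paper's: you build the disks bottom-up by boundary-connect-summing the exotic disks of Corollary \ref{cor: exotic_disks} with a cotangent fiber, whereas the paper works top-down, exhibiting infinitely many pairwise non-symplectomorphic Weinstein structures on the plumbing $T^*S^n \sharp_p T^*S^n$ (namely $(T^*S^n\sharp_p T^*S^n)_{std} \natural \Sigma_k^{2n}$) and then decomposing each, via a relative version of Theorem \ref{thm: all_reg_lag} applied to the cobordism over $T^*S^n_{std}$, as $T^*S^n_{std}$ plus a single handle attached along the boundary of a regular disk meeting $S^n_0$ once. The first two paragraphs of your argument are essentially sound, with some details to supply: you should choose the presentation $T^*S^n_{std}=B^{2n}_{std}\cup H^n_{\Lambda_{unknot}}$ \emph{after} fixing $D^n_i$, placing the small unknot filling in a neighborhood of a boundary point of $B^{2n}_{std}$ disjoint from the compact set $D^n_i$; and regularity of the ambient Lagrangian boundary connected sum $D^n_i \natural T^*S^n_x$ deserves justification, since regularity of two Lagrangians separately does not formally give joint regularity of their union (here it does hold because the complement of the co-core neighborhood in the handle is a trivial collar piece).

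The genuine gap is in the distinguishing step. You invoke a ``connected-sum (coproduct) formula for the Chekanov--Eliashberg DGA'' and the claim that the DGA of $\Lambda_i$ is ``unaffected'' by the passage from $(S^{2n-1},\xi_{std})$ to $ST^*S^n$. Neither is available off the shelf in dimension $2n-1\ge 7$: Legendrian contact homology counts holomorphic curves in the symplectization of the \emph{whole} contact manifold, so the Reeb chords and curves for $\Lambda_i$ do change under the surgery along $\Lambda_{unknot}$ even though $\Lambda_i$ is disjoint from the surgery region --- this is precisely what the Bourgeois--Ekholm--Eliashberg surgery formula is for, and it is not the identity. Even granting a free-product-type formula for $\Lambda_i\#\Lambda_{fiber}$, you would still need to prove that summing with the fixed, non-acyclic DGA of $\Lambda_{fiber}$ preserves pairwise distinctness of whatever derived invariant you extract; this is not automatic. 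The paper sidesteps all of this: its disks are \emph{defined} by decomposing ambient domains already known to be pairwise non-symplectomorphic (by McLean's idempotent count in symplectic homology), so attaching the handle back along $\partial D^n$ recovers those domains and distinctness is immediate. To repair your argument along its own lines, the cleanest fix is to show that attaching $H^n_{\partial \widehat{D}^n_i}$ to $T^*S^n_{std}$ recovers a Weinstein domain containing $\Sigma_k^{2n}$ as a boundary connected summand and quote McLean again, rather than arguing at the level of the Legendrian invariants of the boundaries.
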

Hence $S^n_0 \subset T^*S^n_{std}$ is a flexible Lagrangian (since its complement is a trivial Weinstein cobordism) but $S^n_0 \cup D^n$ is a non-flexible (singular) Lagrangian (or Weinstein subdomain).
To prove this corollary, we follow the same approach as for Corollary \ref{cor: exotic_disks} and show that a related handle-attachment map
\begin{equation}\label{eqn: surjective_plumbing}
\mathfrak{Lagrangian}(T^*S^n_{std}, S^n_0 \vee D^n) \rightarrow \mathfrak{Lagrangian}(T^*S^n\sharp_p T^*S^n, S^n\vee S^n)
\end{equation}
is surjective. The left-hand-side consists of singular regular Lagrangians $S^n_0 \vee D^n$ in $T^*S^n_{std}$, where $S^n_0$ is the zero-section as before; equivalently, we can think of these as regular Lagrangian disks which intersect the zero-section exactly once. On the right-hand-side, we consider arbitrary Weinstein structures on the plumbing $T^*S^n\sharp_p T^*S^n$ that contain the standard plumbing 
$(T^*S^n\sharp_p T^*S^n)_{std}$ as a subdomain, i.e. a singular Lagrangian $S^n \vee S^n$. The surjective of Equation \ref{eqn: surjective_plumbing} is a decomposition result for two plumbed spheres, similar to how the surjectivity of Equation \ref{eqn: surjective_cotangent} was a decomposition of a single sphere. 

We can summarize the approach in Corollaries \ref{cor: exotic_disks}, \ref{cor: disks_int_zero_section} as follows. Once we know that a certain class of structures have many distinct objects, i.e. display rigidity, we can conclude via a flexibility argument that related classes also have many distinct objects without using rigid techniques separately on this second class. In practice, the only method we currently have to show rigidity in the symplectic setting is through J-holomorphic curve invariants (Legendrian contact homology in the examples above). But the argument above shows that whatever method can be used to prove rigidity in the first class can also be used to prove rigidity in the second class. So although it is unclear if flexibility can ever create rigidity on its own, we see that flexibility can propagate rigidity once it is estabilished for some other reason. 

\subsection{Regular Lagrangian caps}

As noted before, it is an open question whether all exact Lagrangians in Weinstein domains are regular. However this is known to be false for Weinstein \textit{cobordisms} $W$ with non-empty negative end $\partial_-W$. Eliashberg and Murphy \cite{EM} showed that there is an existence h-principle for Lagrangians caps whose negative end in $\partial_-W$ is a loose Legendrian. Hence it is possible to construct exact non-regular Lagrangians in $W$ by applying the h-principle to a formal Lagrangian embedding whose complement cobordism has homology above the middle-dimension (and therefore cannot be a Weinstein cobordism for topological reasons).  
In fact, Murphy \cite{Murphy_symplectization} used the caps h-principle to construct non-regular \textit{closed}, exact Lagrangians in the symplectization of overtwisted contact manifolds. 

The proof of the Lagrangian caps h-principle involves Gromov's h-principle for Lagrangian immersions \cite{gromov_hprinciple} and a version of the Whitney trick (which relies on the looseness of the negative end) to remove the double-points of the immersion. These operations do not take the  ambient Weinstein structure into account and hence do not produce regular Lagrangians in general, as noted above. In particular, it was not known whether there is an existence h-principle for \textit{regular} Lagrangian caps with loose negative end. Here we show that such an h-principle does hold, assuming the necessary formal conditions. We say that a smooth cobordism $W^{2n}$ is an \textit{almost} Weinstein cobordism if $W^{2n}$ has an almost complex structure and admits a Morse function all of whose critical points have index at most $n$ (and is increasing, decreasing near the positive, negative boundaries of $W^{2n}$ respectively).
\begin{theorem}\label{thm: caps}
Let $W^{2n}, n \ge 3,$ be a Weinstein cobordism and $L^{n} \subset W^{2n}$ a formal Lagrangian cobordism such that $\partial_-L$ is formally isotopic to a loose Legendrian $\Lambda_- \subset \partial_-W$, $\partial_+L$ is formally isotopic to a Legendrian $\Lambda_+ \subset \partial_+W$, and $W^{2n} \backslash L^n$ is an almost Weinstein cobordism. Then there is a regular Lagrangian cobordism in $W^{2n}$ from $\Lambda_-$ to $\Lambda_+$ formally isotopic to $L$. The same holds if $\partial_+ L$ is empty.
\end{theorem}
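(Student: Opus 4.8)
The plan is to construct the regular Lagrangian cap together with a Weinstein structure on its complement by a single inductive coupled handle attachment that begins at the loose negative end and propagates flexibility upward; throughout I would use the hypothesis $n \ge 3$, which is where loose Legendrians and flexible Weinstein cobordisms exist. Recall that a Lagrangian cobordism is regular exactly when the complement of a cotangent neighborhood $T^*L$ carries a Weinstein cobordism structure compatible with $W$, so the goal is to produce a pair $(W, L')$ with $L'$ formally isotopic to $L$ and $W \backslash T^*L'$ Weinstein. This amounts to upgrading the given \emph{almost} Weinstein structure on the complement to a genuine Weinstein one that is glued compatibly to $T^*L'$, all while keeping $W$ in its given Weinstein homotopy class.

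First I would extract the formal handle data. The almost Weinstein hypothesis on $W \backslash L$ supplies a Morse function whose critical points have index at most $n$ together with a compatible almost complex structure, hence a handle decomposition of the complement by handles of index $\le n$; combined with the smooth handle decomposition of the Lagrangian cobordism $L$ relative to the cylinder $\Lambda_- \times [0,1]$, this yields a formal coupled handle presentation of the pair $(W, L)$ built from $(\partial_-W \times [0,1], \Lambda_- \times [0,1])$. The handles come in two types: Weinstein handles of $W$ that are coupled to handles of $L$ and so build the topology of the cap, and Weinstein handles of $W$ that lie entirely in the complement. At the formal level the attaching data is a collection of isotropic and Legendrian spheres together with their almost complex framings, exactly as in the coupled Weinstein handle attachment of \cite{EGL}.

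Next I would realize this formal presentation geometrically, one handle at a time. The key mechanism is that looseness of $\Lambda_-$ propagates: any Legendrian attaching sphere lying above the loose negative end can be arranged to be loose, which is the analogue in this setting of the role looseness plays in the Whitney trick of \cite{EM}. With all attaching spheres loose, the flexible Lagrangian existence h-principle of \cite{EGL} together with the existence h-principle for flexible Weinstein cobordisms let me realize each coupled handle attachment as an honest Weinstein handle coupled to a Lagrangian handle, in the prescribed formal isotopy class. Because every handle of the complement is then attached along a genuine Legendrian, the complement of $T^*L'$ is assembled from Weinstein handles and is therefore a Weinstein cobordism; thus $L'$ is regular. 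Since the geometric construction realizes the chosen formal handle data, $L'$ is formally isotopic to $L$, and the case in which $\partial_+L$ is empty is identical, simply terminating the cobordism after the last handle.

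I expect the main obstacle to be the simultaneous bookkeeping in the realization step: one must keep the complement Weinstein at every stage of the coupled attachment rather than merely cancelling self-intersections of a Lagrangian immersion, and one must verify that inserting this Weinstein complement reproduces the given Weinstein cobordism $W$ up to Weinstein homotopy rel boundary, so that the resulting regular Lagrangian genuinely lives in $W$. This is precisely where the looseness of $\Lambda_-$ is indispensable, since it is what allows the attaching Legendrians, and hence the complement handles, to be made flexible while the positive end $\Lambda_+$ (or the empty end) is left completely arbitrary, and it is the step that distinguishes the regular h-principle proved here from the non-regular statement of \cite{EM}.
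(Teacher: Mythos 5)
Your first step---propagating the loose chart of $\Lambda_-$ upward through the coupled handle attachment so that every intermediate Legendrian stays loose---is sound and matches the first case of the paper's argument. But the proposal has two genuine gaps, and they are not bookkeeping. First, if you realize every complement handle along a loose attaching sphere, the Weinstein cobordism you assemble is $T^*L' \cup (\text{flexible complement})$, which lives in the correct almost symplectic class but is in general \emph{not} Weinstein homotopic to the given $W$: the hypothesis places no flexibility assumption on $W$, and a non-flexible $W$ cannot be recovered by attaching only loose handles to $T^*L'$. The paper resolves this by first invoking the decomposition $W \simeq W_{flex} \cup C$ with $C$ a smoothly trivial Weinstein cobordism (Theorem \ref{thm: flexible_subdomain}), running the topologically interesting, flexible part of the construction inside $W_{flex}$, and then passing the Lagrangian through $C$ as a \emph{trivial cylinder} over a Legendrian that has merely been isotoped to be loose in the complement of the attaching spheres of $C$---no flexibility of $C$ is needed or assumed. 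Your proposal has no mechanism for absorbing the non-flexible part of the given Weinstein structure.

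Second, your bottom-up propagation of looseness forces the positive boundary of the constructed Lagrangian to be a \emph{loose} Legendrian in the formal class of $\Lambda_+$; since $\Lambda_+$ is arbitrary, this need not be Legendrian isotopic to $\Lambda_+$, so the cobordism you build does not terminate at the prescribed positive end. Asserting that the positive end is ``left completely arbitrary'' does not address this. The paper's third case supplies the missing idea: in a collar $\partial_+W \times [0,1]$ one attaches two symplectically cancelling handles $H^{n-1}_{\Lambda_0}, H^n_{\Lambda}$ in a Darboux ball (so the Weinstein homotopy class is unchanged), handle-slides $\Lambda_+$ over $\Lambda$ twice with opposite signs to produce a loose Legendrian $h^2(\Lambda_+)$ meeting the belt sphere of $H^{n-1}_{\Lambda_0}$ algebraically zero times, isotopes it off the belt sphere using looseness and $n \ge 3$, and extends it trivially to a regular Lagrangian cylinder from a loose Legendrian down below to the genuine $\Lambda_+$ up top. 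Without this bridge (or an equivalent device) your construction only proves the theorem when $\Lambda_+$ is loose or empty.
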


Using the Lagrangian caps h-principle, Eliashberg and Murphy \cite{EM} also proved an h-principle for \textit{Liouville} embeddings of flexible Weinstein domains. More precisely, they showed that if a flexible Weinstein admits an  \textit{almost} symplectic embedding into some Liouville domain, then it admits a Liouville embedding into that Liouville domain, i.e. is a Liouville subdomain.
Since this construction was based on their Lagrangian caps h-principle, which produces non-regular Lagrangians, the resulting Weinstein domains are not necessarily \textit{Weinstein} subdomains and it was unknown when this is possible.
We will use Theorem \ref{thm: caps} to show that there is an h-principle for \textit{Weinstein} embeddings of flexible domains, again assuming the necessary formal conditions. 
\begin{corollary}\label{cor: flex_emb_hprinciple}
Suppose that $X^{2n}, n \ge 3,$ is a Weinstein domain and $W^{2n}_{flex}$ is a flexible domain that has an almost symplectic embedding $i: W^{2n}_{flex}\rightarrow X^{2n}$ such that $X^{2n} \backslash W^{2n}_{flex}$ is an almost Weinstein cobordism. Then $i$ is smoothly isotopic to a Weinstein embedding $j: W^{2n}_{flex} \rightarrow X^{2n}$, i.e. $j(W^{2n}_{flex})$ is a Weinstein subdomain of $X^{2n}$. 
\end{corollary}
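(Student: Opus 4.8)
The plan is to reprove Eliashberg and Murphy's flexible embedding h-principle \cite{EM}, but to feed it the \emph{regular} Lagrangian caps provided by Theorem \ref{thm: caps} in place of the non-regular caps used in \cite{EM}; the regularity is exactly the extra input that promotes a Liouville subdomain to a Weinstein subdomain. Set $C := X^{2n}\setminus i(W^{2n}_{flex})$, a cobordism from $\partial_- C = \partial W_{flex}$ to $\partial_+ C = \partial X$. By hypothesis $C$ is an almost Weinstein cobordism, and the almost symplectic embedding $i$ supplies the formal (almost complex, and formally Legendrian) data along $C$ that any genuine Weinstein structure must realize. The goal is to produce a genuine Weinstein cobordism structure on $C$ with negative end the given contact manifold $(\partial W_{flex}, \xi_{W_{flex}})$ and positive end the given $(\partial X, \xi_X)$; gluing such a $C$ back onto $W_{flex}$ then recovers $X$ up to Weinstein homotopy and exhibits $i(W_{flex})$ as a Weinstein subdomain.

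First I would fix a Morse function on $C$ whose critical points have index at most $n$. The subcritical handles (index $< n$) can be made Weinstein automatically by the standard Cieliebak--Eliashberg procedure, so the content is concentrated in the index-$n$ handles. I would encode these critical handles as a formal Lagrangian cap $L^n \subset C$, namely the union of the core disks of the index-$n$ handles, whose negative boundary $\Lambda_-$ is the collection of Legendrian attaching spheres in $\partial W_{flex}$ and whose complement $C\setminus L$ is the almost Weinstein cobordism obtained by assembling $W_{flex}$ with the subcritical handles. Here the flexibility of $W_{flex}$ is crucial: it guarantees that $\Lambda_-$ is formally isotopic to a \emph{loose} Legendrian in $\partial W_{flex}$, which is precisely the hypothesis on the negative end in Theorem \ref{thm: caps}. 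Since $C\setminus L$ is almost Weinstein and the formal Lagrangian data is provided by $i$, Theorem \ref{thm: caps} then yields a genuine \emph{regular} Lagrangian cap realizing $L$; since the core disks terminate at the critical points, the cap has empty positive end, so it is the last clause of Theorem \ref{thm: caps} that is invoked.

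It remains to read off the Weinstein embedding from the regular cap. A Weinstein neighborhood of the cap supplies the index-$n$ Weinstein handles completing $C$ to a genuine Weinstein cobordism, and regularity --- that is, the fact that the complement of the cap is Weinstein --- is exactly what guarantees that $W_{flex}$ together with the subcritical handles survives as a Weinstein subdomain rather than merely a Liouville one. Gluing gives $X \simeq W_{flex}\cup C$ as Weinstein domains with $j(W_{flex})$ a Weinstein subdomain, and because every step is performed within the smooth isotopy class determined by $i$, the resulting embedding $j$ is smoothly isotopic to $i$. The hard part, and the place where the argument must be checked most carefully, is the translation in the middle step: verifying that the index-$n$ handles of $C$ can be organized into a single formal Lagrangian cap whose complement is almost Weinstein and whose handle attachment recovers $(\partial X,\xi_X)$ on top, and then confirming that the regularity output of Theorem \ref{thm: caps} (absent from \cite{EM}) corresponds precisely to the complement cobordism being Weinstein. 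Establishing this dictionary between the $2n$-dimensional embedding and the $n$-dimensional cap, with all formal data matched, is the main obstacle.
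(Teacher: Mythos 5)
Your high-level plan---rerun Eliashberg--Murphy's argument with the regular caps of Theorem \ref{thm: caps} substituted for their non-regular ones---is exactly the alternative proof the paper sketches in the remark following its own proof. (The paper's primary proof is entirely different and does not use caps at all: it decomposes $X$ as $X_{flex}\cup C'$ with $C'$ smoothly trivial via Theorem \ref{thm: flexible_subdomain}, realizes $X\setminus W_{flex}$ by a \emph{flexible} cobordism $C_{flex}$ using Eliashberg's existence h-principle, and then invokes the \emph{uniqueness} h-principle for flexible Weinstein structures to identify $W_{flex}\cup C_{flex}$ with $X_{flex}$.) However, your execution of the caps route has the decomposition backwards, and this creates three concrete gaps. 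First, you apply Theorem \ref{thm: caps} inside $C=X\setminus i(W_{flex})$, but that theorem requires the ambient cobordism to carry a \emph{genuine} Weinstein structure; $C$ is only almost Weinstein, and equipping it with a Weinstein structure is precisely what you are trying to prove. The correct setup is to first produce a Weinstein embedding of the subcritical part $W_{sub}\subset W_{flex}$ into $X$ (subcritical embeddings satisfy an h-principle), and then look for the \emph{cores of the $n$-handles of $W_{flex}$} as regular caps in the genuine Weinstein cobordism $X\setminus W_{sub}$. Second, and relatedly, your looseness claim is misattributed: flexibility of $W_{flex}$ makes the attaching spheres of $W_{flex}$'s own critical handles loose in $\partial W_{sub}$---which is the hypothesis Theorem \ref{thm: caps} needs when the caps are those handle cores---but it says nothing about the attaching data of the handles of $C$ in $\partial W_{flex}$, which is the $\Lambda_-$ your version of the argument requires to be loose.

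Third, even granting a Weinstein cobordism structure on $C$ with the prescribed contact ends, the step ``gluing such a $C$ back onto $W_{flex}$ then recovers $X$ up to Weinstein homotopy'' does not follow: a Weinstein domain is not determined by its diffeomorphism type together with its contact boundary, so $W_{flex}\cup C\cong X$ smoothly with $\partial(W_{flex}\cup C)=(\partial X,\xi_X)$ does not yield a Weinstein homotopy to $X$. This identification is the crux of the corollary. The paper's main proof resolves it by routing everything through the flexible subdomain $X_{flex}$ and using the uniqueness h-principle for flexible structures; the remark's caps-based alternative avoids it altogether by constructing the subdomain inside the given Weinstein structure on $X$ from the start, so that regularity of the caps (the complement being Weinstein) directly exhibits $W_{sub}\cup T^*L\cong W_{flex}$ as a Weinstein subdomain. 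If you reorient your argument to place the caps in $X\setminus W_{sub}$ rather than in $X\setminus W_{flex}$, the rest of your outline (empty positive end, Weinstein neighborhood of the caps supplying the critical handles, regularity giving the Weinstein complement) goes through as intended.
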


\section{Proofs of Results}
We now give proofs of the results stated in the Introduction. We will need to use the following decomposition theorem from \cite{Lazarev_critical_points} and its variations. 
\begin{theorem}[\cite{Lazarev_critical_points}]\label{thm: flexible_subdomain}
If $W^{2n}, n \ge 3$, is a Weinstein domain, then $W^{2n}$ can be Weinstein homotoped to $W_{flex}^{2n} \cup C^{2n}$, where $C^{2n}$ is a smoothly trivial Weinstein cobordism with two Weinstein handles of index $n-1, n$.  
\end{theorem}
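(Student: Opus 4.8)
The plan is to compare Weinstein handle decompositions of $W^{2n}$ and its flexibilization $W^{2n}_{flex}$. By Cieliebak--Eliashberg's Weinstein handle theory, every Weinstein domain admits a handle decomposition with handles of index at most $n$, and the subcritical part (handles of index $<n$) is flexible and determined up to Weinstein homotopy by its underlying smooth and almost complex data. Since $W_{flex}$ has, by definition, the same formal data as $W$ and is obtained by loosening only the attaching spheres of the critical ($n$-index) handles, I would first Weinstein homotope so that $W$ and $W_{flex}$ share a common subcritical skeleton $W_0$ and differ only in the critical attaching data: a Legendrian link $\Lambda = \sqcup_i \Lambda_i \subset \partial W_0$ for $W$ and the loosened link $\Lambda^{loose} = \sqcup_i \Lambda_i^{loose}$ for $W_{flex}$, where $\Lambda$ and $\Lambda^{loose}$ are formally Legendrian isotopic. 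This reduces the theorem to understanding how these two critical handle attachments are related by Weinstein homotopy after the introduction of two extra handles.

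The key move is a dictionary between Legendrian loosening and Weinstein handle attachment: up to Weinstein homotopy, loosening the attaching sphere of a critical handle can be traded for attaching a smoothly cancelling pair of Weinstein handles of index $n-1$ and $n$, whose $n$-handle supplies a loose chart. Concretely, after creating such a pair, sliding a critical handle over its $n$-handle replaces $\Lambda_i$ by a Legendrian that is loose (a connected sum with a loose chart) and still formally isotopic to $\Lambda_i$; Murphy's h-principle for loose Legendrians (valid since $n\ge 3$) then identifies this with $\Lambda_i^{loose}$ by an honest Legendrian isotopy, hence by a Weinstein homotopy of the attached handle. Carrying this out for every critical handle and reading off the handle decomposition of the result produces $W_0$ with critical handles along a link Legendrian isotopic to $\Lambda^{loose}$, together with the extra pair; that is, $W_{flex}\cup C$ with $C$ a smoothly trivial cobordism of two handles, the smooth triviality being automatic for a cancelling pair.

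The step I expect to be the main obstacle is showing that a \emph{single} cancelling pair suffices to loosen \emph{all} the critical handles simultaneously, rather than one pair per handle, since looseness is a local feature of each individual Legendrian and cannot naively be shared among the disjoint spheres $\Lambda_i$. To overcome this I would first use handle slides, which are Weinstein homotopies, to band-sum the $\Lambda_i$ through a single common Darboux chart, introduce the cancelling pair there so that its loose chart is shared by the entire band-summed family, and then undo the band sums; the delicate point is to verify that after separating the components again each one is loose in the complement of the others, which is exactly the flexibility condition defining $W_{flex}$. A clean way to package this analysis is to realize the formal Legendrian isotopy from $\Lambda$ to $\Lambda^{loose}$ by a genuine Lagrangian cobordism with loose negative end, via the Eliashberg--Murphy caps h-principle \cite{EM}, and to interpret the handles of that cobordism as $C$; the remaining bookkeeping is to confirm that $C$ can be arranged to consist of exactly one handle of index $n-1$ and one of index $n$, which is where the dimension hypothesis $n\ge 3$ is essential.
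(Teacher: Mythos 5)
Your core mechanism coincides with the one the paper sketches from \cite{Lazarev_critical_points}: introduce a smoothly cancelling pair $H^{n-1}_{\Lambda_0}\cup H^n_{\Lambda}$ and handle-slide the critical attaching spheres $\Lambda_i$ of $W$ over $\Lambda$ in charts chosen so that the slid link becomes loose, then identify the result with $W^{2n}_{flex}$ via Murphy's h-principle. However, two of your steps have genuine problems. First, you slide each $\Lambda_i$ over $\Lambda$ \emph{once}. Since $\Lambda$ meets the belt sphere of $H^{n-1}_{\Lambda_0}$ in one point (that is what makes the pair cancelling), a single slide changes the intersection number of $\Lambda_i$ with that belt sphere from $0$ to $\pm 1$: the slid sphere is then neither formally isotopic to $\Lambda_i$ nor pushable back down into $\partial W_0$, so you cannot reorder the handles to exhibit the pair as a cobordism $C^{2n}$ attached \emph{after} all the critical handles, and the loosened domain need not be almost symplectomorphic to $W$ (so it is not $W_{flex}$). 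The paper is explicit that one must slide \emph{twice with opposite signs}, so that $h^2(\Lambda_i)$ is formally isotopic to $\Lambda_i$, meets the belt sphere algebraically zero times, and (by the Whitney trick, using $n\ge 3$) can be isotoped off it; only then is $C^{2n}$ smoothly trivial and attached on top of $W_{flex}$. Relatedly, the looseness of $h^2(\Lambda_i)$ comes from the choice of local chart for the cusp connected sum, not from the $n$-handle of the pair ``supplying'' a loose chart.

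Second, the obstacle you single out---making one pair loosen all components simultaneously---is real, but your proposed resolutions do not work. Band-summing the $\Lambda_i$ together and then ``undoing the band sums'' is not a legitimate sequence of Weinstein homotopies with the properties you need: separating the components while retaining a loose chart on each one, in the complement of the others, is exactly the point at issue, and you leave it open. The actual resolution in \cite{Lazarev_critical_points} is simpler: the slides of the different $\Lambda_i$ are performed over disjoint parallel copies of $\Lambda$ in disjoint local charts, so each $h^2(\Lambda_i)$ acquires its own loose chart disjoint from all other components, which is precisely looseness of the link component-by-component in the complement of the rest. Finally, packaging the argument via the Eliashberg--Murphy Lagrangian caps h-principle is backwards in this context: the caps of \cite{EM} are not regular, so their complements are not Weinstein cobordisms and their handles cannot be ``read off'' as $C^{2n}$; moreover $C^{2n}$ is a Weinstein cobordism between the contact manifolds $\partial W_{flex}$ and $\partial W$, not a Lagrangian cobordism between Legendrians, and the regular caps statement one would need is Theorem \ref{thm: caps} of this paper, whose proof relies on Theorem \ref{thm: flexible_subdomain}---so that route is circular here.
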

We briefly recall the proof of this result. Given two Legendrian spheres $\Lambda_0, \Lambda$ in $(Y, \xi) = \partial W_0$, we can  \textit{handle-slide} $\Lambda$ over $\Lambda_0$ and get a new Legendrian $h_{\Lambda_0}(\Lambda)$. Although $\Lambda_0 \coprod \Lambda$ and $\Lambda_0 \coprod h_{\Lambda_0}(\Lambda)$ may not be isotopic even as smooth links, the Weinstein structures $W_0 \cup H^n_{\Lambda_0} \cup H^n_{\Lambda}$ and $W_0 \cup H^n_{\Lambda_0} \cup H^n_{h_{\Lambda_0}(\Lambda)}$ are Weinstein homotopic.
A handle-slide depends on the choice of a local chart where $\Lambda_0, \Lambda$ look like parallel Legendrians and $h_{\Lambda_0}(\Lambda)$ is obtained by replacing $\Lambda$ with the cusp connected sum of $\Lambda_0, \Lambda$ in this local chart; see \cite{Casals_Murphy_front}. Theorem \ref{thm: flexible_subdomain} is essentially proven by showing that for any Legendrian link $\Lambda_0  \coprod \Lambda_1 \coprod \cdots \coprod \Lambda_k$, we can choose local charts so that the handle-slid link $h_{\Lambda_0}(\Lambda_1) \coprod \cdots \coprod h_{\Lambda_0}(\Lambda_k)$ is loose (but not loose in the complement of $\Lambda_0$). To be more precise, we actually need to do two handle-slides of opposition signs over $\Lambda_0$ to ensure that the cobordism $C^{2n}$ is smoothly trivial; see \cite{Lazarev_critical_points}.  
 
Now we use Theorem \ref{thm: flexible_subdomain} to prove Theorem \ref{thm: EGL_h-principle}.
\begin{proof}[Proof of Theorem \ref{thm: EGL_h-principle}]
We first describe the almost symplectomorphism between $W_{flex}$ and $W$ in more detail. By Theorem \ref{thm: flexible_subdomain}, $W_{flex} \subset W$ is a Weinstein subdomain and the Weinstein cobordism $C^{2n} = W\backslash W_{flex}$ consists of two handles, i.e. $C^{2n} = H^{n-1}_{\Lambda_0} \cup H^n_{\Lambda}$ where $\Lambda_0 \subset \partial W_{flex}$ and $\Lambda \subset \partial (W_{flex} \cup H^{n-1}_{\Lambda_0})$ are the isotropic attaching spheres. Since $C^{2n}$ is smoothly trivial, $\Lambda$ is smoothly isotopic in $ \partial (W_{flex} \cup H^{n-1}_{\Lambda_0})$  to a cancelling Legendrian that intersects the belt sphere of $H^{n-1}_{\Lambda_0}$ in a single point. We can assume that this smooth isotopy is supported in a neighborhood of some collection of  Whitney 2-disks with boundary on $\Lambda$ and the belt sphere of $H^{n-1}_{\Lambda_0}$.
Let $\phi_t$ be the extension of this smooth isotopy to an ambient diffeotopy of  $\partial (W_{flex} \cup H^{n-1}_{\Lambda_0})$, which is also supported in a neighborhood of these disks, and let $A \subset \partial W_{flex}$ be the subset where $\phi_t$ is the identity. 
Now we will show that there is a diffeomorphism $\psi$ between $W_{flex}$ and $W$ which is the identity on $A$. Here we view $A \subset \partial W_{flex} \backslash Op(\Lambda_0 \coprod \Lambda)$ as a subset of $\partial W$, where $Op(\Lambda_0 \coprod \Lambda)$ is a neighborhood 
$\Lambda_0, \Lambda$ in $\partial W_{flex}$ along which the handles are attached; note that $\Lambda$ intersects $\partial W_{flex}$ in a punctured Legendrian sphere whose boundary lies in $\Lambda_0$. To produce the diffeomorphism $\psi$,  we use the isotopy $\phi_t$ to homotope the gradient-like vector field in $W\backslash W_{flex}$ rel $\partial W_{flex}$ to a vector field with no zeroes and flow along that vector field. However this new vector field is no longer Liouville for the symplectic structure on $W\backslash W_{flex}$ and so $\psi$ will not be a symplectomorphism.  Since $\phi_t$ is the identity on $A$, this vector field is fixed over $A$ and so $\psi$ is the identity on $A$. We note that $\psi$ is an almost symplectomorphism since $W\backslash W_{flex}$ is smoothly trivial and any almost symplectic structure on a smoothly trivial cobordism is homotopic to the product structure. 

Let $F: L^n \rightarrow W^{2n}$ be the given formal Lagrangian embedding which we seek to realize by a genuine Lagrangian embedding. 
Then $\psi^{-1}\circ F$ is a formal Lagrangian in $W_{flex}^{2n}$. 
By the existence h-principle for Lagrangians in flexible Weinstein domains \cite{EGL}, $\psi^{-1}\circ F$ admits a flexible Lagrangian embedding $L_{flex}^n$ into $W_{flex}^{2n}$. We can Legendrian isotope the Legendrian boundary $\partial L_{flex}^n \subset \partial W_{flex}^{2n}$ so that it is disjoint from the isotropic attaching spheres $\Lambda_0, \Lambda$ of the two handles in $C^{2n}$. This is because a small neighborhood of $\partial L_{flex}$ is contactomorphic to $J^1(\partial L_{flex})$ and nearby Legendrians  are given by graphs of 1-jets of functions. Thom's jet transversality theorem shows that for any submanifold $\Sigma^k$ of $J^1(\partial L_{flex}^n)$ such that $k < n$, there exists a $C^0$-small function on $\partial L_{flex}^n$ whose 1-jet in $J^1(\partial L_{flex}^n)$  is disjoint from $\Sigma^k$; see Theorem 2.3.2 of  \cite{eliashberg_mishachev}. The isotropic attaching spheres $\Lambda_0^{n-2}, \Lambda^{n-1}$ of $C^{2n}$ have dimension less than $n$ and hence we can find such a Legendrian isotopy of $\partial L_{flex}$.
In particular, we can assume that $L_{flex} \subset W_{flex}$ is a flexible Lagrangian such that $\partial L_{flex}^n$ is disjoint from these attaching spheres. Furthermore, since $n \ge 3$, we can assume that $\partial L_{flex}^n$ is disjoint from the Whitney 2-disks inducing $\phi_t$ and hence contained in $A \subset \partial W_{flex}^{2n}$. 
Now we attach handles to $W_{flex}^{2n}$ along $\Lambda_0, \Lambda$ to form $W^{2n}$. Since $\partial L_{flex}^n$ is disjoint from these attaching spheres, it extends trivially to a Lagrangian with Legendrian boundary in $W^{2n}$ which we also call $L^n$. Since the cobordism $C^{2n}$ is Weinstein, $L^n$ is regular in $W^{2n}$.

Finally, we note that $L^n \subset W^{2n}$ is in the original Lagrangian formal class $F$. 
Since $\partial L_{flex}^n$ is contained in $A$ and the almost symplectomorphism $\psi$ between  $W_{flex}^{2n}$ and $W^{2n}$ is the identity on $A$, $\psi(L_{flex}^n)$  agrees with its trivial extension $L^n \subset W^{2n}$ described previously. Since $L_{flex}^n \subset W_{flex}^{2n}$ is in the formal class $\psi^{-1}\circ F$ by construction and the almost symplectomorphism $\psi$ preserves Lagrangian formal classes, $L^n = \psi(L_{flex}^n)$ will be in the desired formal class $F$.
\end{proof}
\begin{remark}
Even when $W = W_{flex}$, the almost symplectomorphism $\psi$ produced via the procedure above will not be a symplectomorphism. If $\psi$ were a symplectomorphism (of completions), then $L = \psi(L_{flex}) \subset \psi(W_{flex}) = W_{flex}$ would be a flexible Lagrangian. However as we will see later in Theorem \ref{thm: lag_concordance} and Corollary \ref{cor: exotic_disks}, all regular Lagrangians are of the form $\psi(L_{flex})$ but there are non-flexible Lagrangians even in flexible Weinstein domains. 	
\end{remark}

As noted before, Theorem \ref{thm: EGL_h-principle} does not hold in dimension 4 since there are Lagrangian formal classes not realized by any genuine Lagrangians. However an analogous construction in dimension four was considered by Yasui \cite{Yasui_disks_4}, who constructed many Lagrangians disks in $B^4_{std}$ by trivially extending the Lagrangian unknot $D^2 \subset T^*D^2 = B^4_{std}$ across a trivial Weinstein cobordism $(S^{3}, \xi_{std}) \times [0,1]$, presented as a Weinstein cobordism with two handles of index $1$ and $2$. These Lagrangian disks (and their Legendrian boundaries) are often in different formal classes, even different smooth isotopy classes; for example, there exist many smoothly slice knots in $S^3$ that are not isotopic to the unknot. Theorem \ref{thm: EGL_h-principle} is high-dimensional which gives us control over the formal class of the Lagrangian.

It is also crucial that the cobordism $C^{2n}$ is Weinstein. In this case, we can make the Legendrian boundary of $L_{flex}^n$ disjoint from the attaching Legendrians and extend $L_{flex}^n$ to a Lagrangian in $W^{2n}$.
So the key idea is that a Weinstein cobordism $C^{2n}$ modifies its negative contact boundary $\partial_- C^{2n}$ only in a small region, of dimension less than $n$. If we only knew that the cobordism had a Liouville structure, as shown earlier by Eliashberg and Murphy \cite{EM}, then we could not necessarily conclude that the Lagrangian extends since the Liouville cobordism could in principle modify the negative boundary $\partial_-C^{2n}$ in an arbitrarily large region. In particular, the following question is open. 
\begin{question}
Is there an existence h-principle for exact Lagrangians with Legendrian boundary in general Liouville domains?
\end{question}
Of course these Lagrangians will not be regular since the ambient domain is not Weinstein. A related question is which Liouville domains are \textit{non-degenerate} in the sense of Ganatra \cite{Ganatra_sym_cohm}. 

Now we prove Theorem \ref{thm: lag_concordance} that all regular Lagrangians with Legendrian boundary come from the construction in Theorem \ref{thm: EGL_h-principle}.

\begin{proof}[Proof of Theorem \ref{thm: lag_concordance} ]
Since $L^n$ is regular in $W^{2n}$, by definition
$(W^{2n}, L^n)$ is Weinstein homotopic to $(T^*L^n \cup X^{2n}, L^n)$ for some Weinstein cobordism $X^{2n}$. 
Then by Theorem \ref{thm: flexible_subdomain}, we can homotope $X^{2n}$ to $X_{flex}^{2n} \cup C^{2n}$, where $C^{2n}$ is smoothly trivial. The proof of this result involves Weinstein homotoping $X^{2n}$ to $X_{flex}^{2n} \cup C^{2n}$ relative to the closed contact manifold $\partial T^*L^n$. However, we can also do this Weinstein homotopy relative to $\partial T^*L^n \backslash \partial L^n$, i.e. view $X^{2n}$ as cobordism with corners $\partial ST^*L$ and require the homotopy to be fixed on the corners as well. This is because we can pick the Darboux balls and isotropic arcs in  \cite{Lazarev_critical_points} used to do the handle-slides away from $\partial L$. As a result, the attaching spheres of $X_{flex}^{2n}$ will be loose in the complement of $\partial L^n \subset \partial T^*L^n$ and hence $L^n \subset T^*L^n \cup X_{flex}^{2n} = W_{flex}^{2n}$ will be a flexible Lagrangian. We denote this Lagrangian by $L_{flex}^n \subset W_{flex}^{2n}$. When we attach $C^{2n}$ to $W_{flex}^{2n}$ to get $W^{2n}$, the Lagrangian $L_{flex}^n \subset W_{flex}^{2n}$ extends trivially to $L^n$ (as in the proof of Theorem \ref{thm: EGL_h-principle}). Hence $(W^{2n}, L^n)$ is Weinstein homotopic to $(W^{2n}_{flex} \cup C^{2n}, L_{flex}^n)$. 
\end{proof}

We can apply a modified version of Theorem \ref{thm: lag_concordance} to Lagrangian disks in $T^*S^n$ and prove Corollary \ref{cor: reg_disk}.  
\begin{proof}[Proof of Corollary \ref{cor: reg_disk}]
Since $D^n \subset T^*S^n$ is a regular Lagrangian, $(T^*S^n, D^n)$ is Weinstein homotopic to $(T^*D^n \cup C^{2n}, D^n)$. Here $T^*D^n$ has the standard Weinstein structure and $C^{2n}$ is a Weinstein cobordism with corners, which by the Whitney trick and fact that $n \ge 3$ has a smooth handle-body decomposition with a single handle of index $n$. By a slight variation on Theorem \ref{thm: flexible_subdomain} (see Theorem 1.1 of \cite{Lazarev_critical_points}), $C^{2n}$ can be Weinstein homotoped (relative to the corners $\partial ST^*D^n)$ to a Weinstein structure with a single Weinstein handle of index $n$. Hence $(T^*S^n, D^n)$ is Weinstein homotopic to $(T^*D^n \cup H^n_{\Lambda}, D^n)$ as desired. 
\end{proof}

Now we prove Theorem \ref{thm: all_reg_lag}, a version of Theorem \ref{thm: lag_concordance} for closed Lagrangians. 
\begin{proof}[Proof of Theorem \ref{thm: all_reg_lag}]
Since $M^n \subset W^{2n}$ is regular, by definition $(W^{2n}, M^n)$ is Weinstein homotopic to $(T^*M^n \cup C^{2n}, M^n)$ for some Weinstein cobordism $C^{2n}$. 
The $n$-handles of $C^{2n}$ are attached along a Legendrian link $\Lambda$ in $\partial(T^*M \cup C_{sub}^{2n})$, where $C_{sub}^{2n}$ is the subcritical part of $C^{2n}$. By attaching the single $n$-handle of $T^*M$ \textit{after} these $n$-handles of $C^{2n}$, we can consider $\Lambda$ as a Legendrian link in $\partial (T^*(M^n\backslash D^n) \cup C_{sub}^{2n})$.  
  
Now we handle-slide $\Lambda$ over $\partial (M^n\backslash D^n) \subset  \partial (T^*(M^n\backslash D^n) \cup C_{sub}^{2n})$ so that the resulting Legendrian link 
$h_{\partial(M^n\backslash D^n) }(\Lambda)$ is loose (but not loose in the complement of $\partial (M^n\backslash D^n) \subset \partial
(T^*(M^n\backslash D^n) \cup C_{sub})$). Since  $T^*(M^n \backslash D^n)$ is subcritical, $T^*(M^n \backslash D^n) \cup C_{sub}^{2n} \cup H^n_{h(\Lambda)}$ is a flexible Weinstein domain, which we denote by $V_{flex}^{2n}$. Furthermore $V^{2n}_{flex} \cup H^n_{\partial(M^n \backslash D^n)}=
T^*(M^n \backslash D^n) \cup C_{sub}^{2n} \cup  H^n_{h(\Lambda)} \cup H^n_{\partial(M^n\backslash D^n)}$ is homotopic to 
$T^*(M^n \backslash D^n) \cup C_{sub}^{2n} \cup H^n_{\Lambda} \cup H^n_{\partial(M^n \backslash D^n)}  = T^*M^n \cup C_{sub}^{2n} \cup H^n_{\Lambda} = T^*M^n  \cup C^{2n} = W^{2n}$, the original Weinstein structure. More precisely, this homotopy is just a Legendrian isotopy from $\Lambda$ to $h_{\partial(M^n\backslash D^n)}(\Lambda)$ in 
$\partial (T^*M^n \cup C_{sub}^{2n})$. This homotopy occurs above $T^*M^n$ and so $(V_{flex}^{2n} \cup H^n_{\partial(M^n\backslash D^n) }, M^n\backslash D^n \cup H^n_{\partial(M^n\backslash D^n)})$ is Weinstein homotopic to $(W^{2n}, M^n)$, as desired. 
\end{proof}

Applying Theorem \ref{thm: all_reg_lag} to Lagrangian spheres in cotangent bundles, we prove Corollary \ref{cor: reg_sphere_cotangent}. 
\begin{proof}[Proof of Corollary \ref{cor: reg_sphere_cotangent}]
By Theorem \ref{thm: all_reg_lag}, 
$(T^*S^n, S^n)$ is Weinstein homotopic to $(V_{flex}^{2n} \cup H^n_{\partial D^n}, D^n \cup H^n_{\partial D^n})$ for some regular Lagrangian disk $D^n$ in a flexible domain $V_{flex}$. 
Since $S^n \subset T^*S^n$ is regular, we have $[S^n] = \pm 1 \subset H_n(T^*S^n) \cong \mathbb{Z}$. The co-core $C^n$ of the handle $H^n_{\partial D^n}$ intersects $S^n$ in exactly one point and hence
$[C^n] = \pm 1 \in H_n(T^*S^n, \partial T^*S^n) \cong \mathbb{Z}$. Since $V_{flex}$ is obtained by caving out $C^n$ from $T^*S^n$, we see that $V_{flex}$ is a homology ball (with simply-connected boundary since $n \ge 3$). By the h-cobordism theorem, $V_{flex}$ must be diffeomorphic to the ball. Since $V_{flex}$ is flexible and the ball has a unique almost symplectic structure, $V_{flex}$ must be Weinstein homotopic to $B^{2n}_{std}$ by the h-principle for flexible Weinstein structures \cite{CE12}. 
\end{proof}

We can use Corollary \ref{cor: reg_sphere_cotangent} to produce many Lagrangian disks in the standard Weinstein ball and prove Corollary \ref{cor: exotic_disks}.
\begin{proof}[Proof of Corollary \ref{cor: exotic_disks}]
McLean constructed an exotic Weinstein ball $\Sigma^{2n}, n \ge 4,$ and showed that $\Sigma_k^{2n}:=\natural_{i=1}^k \Sigma^{2n}$ are non-symplectomorphic since they have different number of idempotents in their symplectic homology. Since symplectic homology is additive under boundary connected sum and $SH(T^*S^n_{std})$ has only one non-zero idempotent, $T^*S^n_{std} \natural \Sigma_k^{2n}$ are also non-symplectomorphic. Furthermore, each $T^*S^n_{std} \natural \Sigma_k^{2n}$ contains a regular Lagrangian sphere, i.e. the zero-section $S^n_0$ of $T^*S^n_{std}$. By Theorem \ref{thm: all_reg_lag}, there is a regular Lagrangian disk $D_k^{2n} \subset B^{2n}_{std}$ such that $(T^*S^n_{std} \natural \Sigma_k^{2n}, S^n_0)$ is Weinstein homotopic to $(B^{2n}_{std} \cup H^n_{\partial D^n_k}, D_k^{2n} \cup H^n_{\partial D^n_k})$. 
Since $T^*S^n_{std} \natural \Sigma_k^{2n}$ are not symplectomorphic for different $k$, neither are the Lagrangian disks $D^n_k \subset B^{2n}_{std}$. More explicitly, these disks can be distinguished by the Hochschild homology $LH^{Ho}(\partial D^n_k)$ of the Legendrian contact algebra of their Legendrian boundary, which equals $SH(T^*S^n_{std} \natural \Sigma^{2n}_k) $ by \cite{BEE12}. In particular, the Legendrian boundaries $\partial D^n_k$ of these disks are also non-isotopic. 
\end{proof}
\begin{remark}
Since the wrapped Floer homology of the Lagrangian disks vanishes, the \textit{linearized} Legendrian contact homology induced by the Lagrangian disk fillings is the singular homology of an $n$-disk. Hence this invariant is the same for all the Lagrangians and cannot be used to distinguish them. 
\end{remark}

Next we prove Corollary \ref{cor: disks_int_zero_section}:  there are many Lagrangian disks in the standard cotangent bundle intersecting the zero-section exactly once. 
\begin{proof}[Proof of Corollary \ref{cor: disks_int_zero_section}]
It suffices to prove that the handle-attachment map for plumbings in Equation \ref{eqn: surjective_plumbing} is surjective since there are infinitely many different Weinstein structures on $T^*S^n \sharp_p T^*S^n$ containing $(T^*S^n \sharp_p T^*S^n)_{std}$. For example 
$(T^*S^n \sharp_p T^*S^n)_{std}\natural \Sigma_k^{2n}$ is an infinite collection of such structures, where $\Sigma_k^{2n}$ are McLean's exotic Weinstein structures on the ball \cite{MM}.
 The  surjectivity of Equation \ref{eqn: surjective_plumbing} basically follows from a relative version of Theorem \ref{thm: all_reg_lag}, which was used to prove the surjectivity of Equations \ref{eqn: surjective}, \ref{eqn: surjective_cotangent}.
Namely, we can view
$(T^*S^n \sharp_p T^*S^n,  (T^*S^n \sharp_p T^*S^n)_{std})$ as 
$(T^*S^n_{std} \cup C^{2n}, S^n_0 \vee T^*S^n_x \cup L^n)$, where $C^{2n}$ is a Weinstein cobordism with  $\partial_-C^{2n} = \partial T^*S^n$  that admits a smooth Morse function with a single handle of index $n$ 
and $L^n \subset C^{2n}$ is a regular Lagrangian disk cap with $\partial_- L^n = \partial T^*S^n_x$. Then a  version of Theorem \ref{thm: all_reg_lag} for cobordisms implies that 
 $(C^{2n}, L^n)$  is Weinstein homotopic to 
$(W_{flex}^{2n}  \cup H^n_{\partial_+(S^{n-1} \times [0,1])}, S^{n-1} \times [0,1] \cup H^n_{\partial_+(S^{n-1} \times [0,1])})$. Here $W_{flex}^{2n}$ is a Weinstein cobordism with $\partial_- W_{flex}^{2n} = \partial T^*S^n$ and $S^{n-1} \times [0,1] \subset W_{flex}^{2n}$ is a regular Lagrangian cylinder with $\partial_+(S^{n-1} \times [0,1]) = \partial T^*S^n_x \subset \partial_-W_{flex}^{2n}$ and $\partial_-(S^{n-1} \times [0,1]) \subset \partial_+ W_{flex}^{2n}$.
 The flexible cobordism $W_{flex}^{2n}$ is smoothly trivial and hence is Weinstein homotopic to the trivial Weinstein structure $\partial T^*S^n \times [0,1]$. Then $(T^*S^n_{std} \cup W_{flex}^{2n}, S^n_0 \vee (T^*S^n_x \cup S^{n-1} \times [0,1]))$ 
is Weinstein homotopic to $(T^*S^n_{std}, S^n_0 \vee D^n)$ for some regular Lagrangian disk $D^n \subset T^*S^n_{std}$. Since $S^{n-1} \times [0,1] \subset W_{flex}^{2n}$, the disk $D^n \subset T^*S^n_{std}$ intersects the zero-section $S^n_0$ in precisely one point $T^*S^n_x \cap S^n_0 = \{x\}$. Furthermore, 
$(T^*S^n_{std} \cup H^n_{\partial D^n}, S^n_0 \vee D^n \cup H^n_{\partial D^n})$ is Weinstein homotopic to $(T^*S^n_{std} \cup C^{2n}, S^n_0 \vee T^*S^n_x \cup L^n)$ and hence to $(T^*S^n \sharp_p T^*S^n,  (T^*S^n \sharp_p T^*S^n)_{std})$.
\end{proof}

Now we prove  Theorem \ref{thm: caps}, a regular version of the Lagrangian caps h-principle due to Eliashberg and Murphy \cite{EM}. 
\begin{proof}[Proof of Theorem \ref{thm: caps}]
We will break down the proof into three cases.
First, we will prove the case when $W^{2n}$ is a flexible Weinstein cobordism and $\Lambda_-, \Lambda_+$ are both loose. 
Then we will prove the case when $W^{2n}, L^n$ are both smoothly trivial and $\Lambda_-, \Lambda_+$ are both loose. Finally, we will prove the case when $W^{2n}$ is smoothly trivial with the trivial product Weinstein structure, $L^n$ is smoothly trivial, $\Lambda_-$ is loose but $\Lambda_+$ is arbitrary. The general case follows by gluing the Lagrangians and Weinstein cobordisms produced in these three cases.

We first prove the case when $W^{2n}$ is a flexible Weinstein cobordism $W_{flex}$ and $\Lambda_-, \Lambda_+$ are both loose. 	
By the h-principle for flexible Lagrangians \cite{EGL}, there is a flexible Lagrangian cobordism $L_{flex} \subset W_{flex}^{2n}$ such that $\partial_-L_{flex} = \Lambda_-$ in $\partial_- W_{flex}^{2n}$ and $L$ is in the prescribed formal class. 
Recall that 
$\partial_-L_{flex} = \Lambda_-$ is loose by assumption. 
We will show that $\partial_+ L_{flex} \subset \partial_+ W_{flex}^{2n}$ is also loose. 
To see this, note that $L_{flex} \subset W_{flex}^{2n}$ is constructed in two steps: first we attach $T^*L$ to $\partial_-W_{flex}^{2n}$ along $\Lambda_-$ and then attach $W^{2n}_{flex} \backslash T^*L$. For the first step, suppose $S^{k-1} \subset \Lambda_-$ is an attaching sphere for a $k$-handle of $T^*L$. By the h-principle for loose Legendrians \cite{Murphy11}, we can assume that $\Lambda_-$ has a loose chart $U$ such that $\Lambda_- \cap U$ is a disk $D^{n-1} \subset \Lambda_-$.
Since $D^{n-1}$ is a disk, we can smoothly isotope $S^{k-1}$ in $\Lambda_-$ so that  $S^{k-1}, D^{n-1}$ are disjoint; note that this smooth isotopy is in fact an isotropic isotopy of $S^{k-1}$ in $\partial W_{flex}$  since $\Lambda_-$ is isotropic. Because $\Lambda_- \cap U = D^{n-1}$, we see that $S^{k-1}$ is disjoint from the loose chart $U$. So when we attach a handle along $S^{k-1}$, the loose chart persists and the resulting Legendrian will still be loose. 
Iterating this procedure, we see that $\partial_+ L \subset \partial_- W_{flex}^{2n} \cup \partial T^*L$ is also loose.
For the second step, the attaching spheres for $W^{2n}_{flex}\backslash T^*L$ are loose in the complement of $\partial_+ L \subset \partial_- W_{flex}^{2n} \cup \partial T^*L$
(this is what it means for $L_{flex }\subset W^{2n}_{flex}$  
to be a flexible Lagrangian) and so the loose chart of $\partial_+ L \subset \partial_- W_{flex}^{2n} \cup \partial T^*L$ again extends to a loose chart of $\partial_+ L_{flex} \subset \partial_+ W_{flex}^{2n}$.
Because $L_{flex}$ is in the correct formal class,
so is $\partial_+ L_{flex}$, i.e. formally Legendrian isotopic to $\Lambda_+$. Since $\partial_+L_{flex}, \Lambda_+$ are both loose, they are actually Legendrian isotopic by the loose Legendrian h-principle \cite{Murphy11}. 

Now we prove the second case when $W^{2n}, L^n$ are smoothly trivial and $\Lambda_-, \Lambda_+$ are both loose. By \cite{Lazarev_critical_points}, we can assume that $W^{2n}$ has a Weinstein presentation with two handles 
$H^{n-1}_{\Lambda_0}, H^n_{\Lambda}$ (although having precisely two handles will not really matter for the argument here). 
There is a Legendrian isotopy $\phi_t(\Lambda_-), t\in [0,1],$ of  $\Lambda_- \subset \partial_- W^{2n}$ so that $\phi_1(\Lambda_-)$ is loose in the complement of $\Lambda_0, \Lambda$ (of course $\phi_t(\Lambda_-)$ might cross $\Lambda_0, \Lambda$ during this isotopy). Let $L^n \subset W^{2n}$ be the concatenation of the graph of this isotopy in $\partial_-W^{2n} \times [0,1]$ with  the trivial extension of $\phi_1(\Lambda_-)$ in $W^{2n}$. Then $L^n$ is in the correct formal class and $\partial_+ L^n$ is loose by construction. Therefore $\partial_+ L^n$ is Legendrian isotopic to $\Lambda_+$. 

Next we prove the third case when $W^{2n}$  has the trivial product Weinstein structure  $(Y^{2n-1}, \xi) \times[0,1]$, $L^n$ is smoothly trivial, and $\Lambda_-$ is loose but $\Lambda_+$ is arbitrary. First we attach two symplectically cancelling handles $H^{n-1}_{\Lambda_0}, H^n_{\Lambda}$ to $W^{2n}$ such that $\Lambda_0, \Lambda \subset \partial_+ W^{2n}$ are contained in a Darboux ball. Now we handle-slide $\Lambda_+ \subset \partial_+ W^{2n}$ over $\Lambda$ twice (with opposite orientations) so that the resulting Legendrian $h^2(\Lambda_+) \subset \partial_+(W^{2n} \cup H^{n-1}_{\Lambda_0})$ is loose and intersects the belt sphere of $H^{n-1}_{\Lambda_0}$ algebraically zero times. Since we are working in the ball, which is simply-connected, and $n \ge 3$, we can use the Whitney trick to smoothly isotope $h^2(\Lambda_+)$ off the belt sphere. Since $h^2(\Lambda_+)$ is loose, we can actually Legendrian isotope $h^2(\Lambda_+)$ off this belt sphere and view $h^2(\Lambda_+)$ as a Legendrian in $ \partial_+ W^{2n}$. 
Let $L^n \subset W^{2n} \cup H^{n-1}_{\Lambda_0} \cup H^n_{\Lambda} = W^{2n}$ be the regular Lagrangian obtained by extending $h^2(\Lambda_+)$ trivially when $H^{n-1}, H^n_{\Lambda}$ are attached.  So $\partial_+ L^n \subset \partial_+ W^{2n}$ is Legendrian isotopic to $\Lambda_+$ and $\partial_-L^n  =h^2(\Lambda_+) = \subset \partial_- W^{2n}$ is loose.
Furthermore, $L^n$ is formally isotopic to a product Lagrangian. Since $\partial_+ L^n$ is Legendrian isotopic to $\Lambda_+$, 
$\partial_-L^n$ must be formally isotopic to $\Lambda_-$. Since $\partial_- L^n$ and $\Lambda_-$ are both loose, they are Legendrian isotopic by the h-principle for loose Legendrians \cite{Murphy11}. 
This finishes the proof of Theorem \ref{thm: caps} when $\partial_+L$ is non-empty. 

Finally, we prove the case when  $\partial_+ L$ is empty. We first realize $L$ as a flexible  Lagrangian in $W_{flex}$ with $\partial_- L = \Lambda_-$. We cannot directly apply the h-principle for flexible Lagrangians \cite{EGL} since $\partial_+ L = \emptyset$. Instead,  we first Weinstein homotope $W_{flex}^{2n}$ to $V_{flex}^{2n} \cup H^n_{\Lambda}$, for some loose Legendrian sphere $\Lambda$, such that $L\backslash D^n$ has a flexible Lagrangian embedding into $V_{flex}^{2n}$ with $\partial_+(L^n\backslash D^n) = \Lambda \subset \partial_+ V_{flex}^{2n}$. Then 
$L^n \backslash D^n \cup H^n_{\Lambda} \subset V_{flex}^{2n} \cup H^n_{\Lambda}$ is a flexible Lagrangian embedding 
$L^n \subset W_{flex}^{2n}$.  
Then we attach the Weinstein cobordism $W\backslash W_{flex}$ from  Theorem \ref{thm: flexible_subdomain} to $W_{flex}^{2n}$ and obtain the desired regular Lagrangian $L\subset W^{2n}$. 
\end{proof}

We conclude by proving Corollary \ref{cor: flex_emb_hprinciple}, an h-principle for Weinstein embeddings of flexible domains. 
\begin{proof}[Proof of Corollary \ref{cor: flex_emb_hprinciple}]
By Theorem \ref{thm: flexible_subdomain}, $X_{flex}$ is a Weinstein subdomain of $X$ such that $X\backslash X_{flex}$ is smoothly trivial. Also, we can realize the almost Weinstein cobordism $X\backslash W_{flex}$ by a flexible cobordism $C_{flex}$ by Eliashberg's existence h-principle \cite{CE12}. Then $W_{flex} \cup C_{flex}$ is a flexible domain that is almost symplectomorphic to $X_{flex}$. So by the h-principle for flexible domains \cite{CE12}, $W_{flex} \cup C_{flex}$ is Weinstein homotopic to $X_{flex}$.
 In particular, $W_{flex}$ is a Weinstein subdomain of $X_{flex}$ and hence a Weinstein subdomain of $X$. Since $X\backslash X_{flex}$ is smoothly trivial, this new embedding is smoothly isotopic to the original embedding.
\end{proof}
\begin{remark}
Alternatively, we can prove Corollary \ref{cor: flex_emb_hprinciple} 
by first constructing a Weinstein embedding of $W_{sub}$ into $X$ and then using Theorem \ref{thm: caps} to find a regular embedding of the cores of the $n$-handles of $W_{flex}$ into $X\backslash W_{sub}$. This second proof is more along the lines of Eliashberg and Murphy's original proof \cite{EM} of this result for Liouville embeddings, which directly uses their Lagrangian caps h-principle. 
\end{remark}

\bibliographystyle{abbrv}
\bibliography{sources}

\end{document}